\newtheorem{algorithm}{Algorithm}[section]
\def\cA{\mathcal{A}}
\def\cF{\mathcal{F}}
\def\cG{\mathcal{G}}
\def\cO{\mathcal{O}}
\def\R{\mathbb{R}}
\def\w\eta{\widetilde{\eta}}
\journalname{Journal of Scientific Computing}
\begin{document}

\title{Multigrid method for nonlinear eigenvalue problems based on Newton iteration\thanks{This work was supported by General projects
of science and technology plan of Beijing Municipal Education Commission (Grant No. KM202110005011), National Natural Science Foundation of China (Grant Nos. 11801021).}
}
%\subtitle{Do you have a subtitle?\\ If so, write it here}

\titlerunning{Multigrid method for nonlinear eigenvalue problems based on Newton iteration}        % if too long for running head

\author{Fei Xu        \and
        Manting Xie
        \and
        Meiling Yue
}

%\authorrunning{Short form of author list} % if too long for running head

\institute{   F. Xu \at
              Institute of Computational Mathematics, Department of Mathematics, Faculty of Science, Beijing University of Technology, Beijing, 100124, China. \\
              \email{xufei@lsec.cc.ac.cn}           %  \\
              \and
              M. Xie \at
              Center for Applied Mathematics, Tianjin University, Tianjin 300072, China. \\
              \email{mtxie@tju.edu.cn}           %  \\         %
              \and
              M. Yue \at
              School of Mathematics and Statistics, Beijing Technology and Business University, Beijing 100048, China.
              \email{yuemeiling@lsec.cc.ac.cn}
}

\date{Received: date / Accepted: date}
% The correct dates will be entered by the editor

\maketitle

\begin{abstract}
In this paper, a novel multigrid method based on Newton iteration is proposed to solve nonlinear eigenvalue problems.
Instead of handling the eigenvalue $\lambda$ and eigenfunction $u$ separately, we treat the eigenpair $(\lambda, u)$
as one element in a product space $\mathbb R \times H_0^1(\Omega)$. Then in the presented multigrid method, only one discrete
linear boundary value problem needs to be solved for each level of the multigrid sequence.
Because we avoid solving large-scale nonlinear eigenvalue problems directly, the overall efficiency is significantly improved.
The optimal error estimate and linear computational complexity can be derived simultaneously.
In addition, we also provide an improved multigrid method coupled with a mixing scheme to further guarantee
the convergence and stability of the iteration scheme.
More importantly, we prove convergence for the residuals after each iteration step.
For nonlinear eigenvalue problems, such theoretical analysis
is missing from the existing literatures on the mixing iteration scheme.
\keywords{Multigrid method \and nonlinear eigenvalue problems \and Newton iteration.}
% \PACS{PACS code1 \and PACS code2 \and more}
% \subclass{MSC code1 \and MSC code2 \and more}
\end{abstract}

%====================================================================================================================
\section{Introduction}
%Solving large-scale nonlinear eigenvalue problems is a fundamental and challenging task in modern science and engineering computing.
Solving large-scale nonlinear eigenvalue problems is a basic and challenging task in the field of scientific and engineering computing.
Various practical problems reduce to solve nonlinear eigenvalue problems.
For example, the Schr\"{o}dinger-Newton equation which has been widely used to model the quantum state reduction \cite{harri},
the Gross-Pitaevskii equation which describes Bose-Einstein condensates \cite{baowei}, and the Kohn-Sham equation
which is used to describe the electronic structure in quantum chemistry \cite{CancesChakirMaday,Canceschakir}.
In this paper, we will study a class of nonlinear eigenvalue models with convex energy functional.
It is quite difficult to solve such nonlinear eigenvalue problems whose computational work grows exponentially as the problem size increases.
Additionally, there are far fewer available numerical methods for nonlinear eigenvalue problems than those for the boundary value problems.

In this study, we resort to the multigrid method to improve the solving efficiency for nonlinear eigenvalue problems.
The multigrid method was first proposed by Fedorenko \cite{fedore} in 1961. Later, Brandt \cite{brandt} demonstrated the efficiency of the multigrid method,
which was then a widely examined subject in the computational mathematics field.
Subsequently, Hackbusch, Xu, and many others began to use the tool of functional analysis to analyze this algorithm \cite{Hackbusch_Book,Xu,Shaidurov},
which made the multigrid method experience rapid development.
As we know, the multigrid method is able to derive an approximate solution possessing the optimal error estimates with
the linear computational complexity.
For eigenvalue problems, the study on the multigrid method is relatively limited.
Xu and Zhou \cite{XuZhou_2001} first designed a two-grid method for the linear eigenvalue problem. This algorithm requires solving a small-scale eigenvalue problem on
a coarse mesh and a large-scale boundary value problem on a fine mesh. When the mesh sizes of the coarse mesh ($H$) and fine mesh ($h$) have an appropriate proportional relation ($H=\sqrt{h}$),
the optimal estimate for the approximate solution can be derived.
Later, Chen and Liu et al. \cite{chenliu} extended the two-grid method to solve nonlinear eigenvalue problems using specially selected coarse and fine meshes.
However, owing to the strict constrains on the ratio (i.e., $H=\sqrt{h}$), the two-grid method only performs on two mesh levels and cannot be used in the multigrid sequence.

To solve the nonlinear eigenvalue problems via the multigrid technique, the Newton iteration is adopted.
In this paper, the eigenvalue $\lambda$ and
eigenfunction $u$ are treated as one element in a product space $\mathbb R\times H_0^1(\Omega)$.
Then, the nonlinear eigenvalue equation can be viewed as a special nonlinear equation defined
in $\mathbb R\times H_0^1(\Omega)$. Next, based on the Newton iteration technique, we simply solve a linear
boundary value problem in each layer of the multigrid sequence. Solving the large-scale nonlinear eigenvalue problem is avoided in our algorithm. Besides, the involved
linearized boundary value problems can be solved quickly by many mature numerical algorithms, such as the classical multigrid method.
Hence, the presented algorithm can significantly {\color{blue}improve} the solving efficiency for nonlinear eigenvalue problems.
The well-posedness for these linearized boundary value problems and the convergence for our entire algorithm {\color{blue}have} been rigorously
analyzed in this paper.

For the above-mentioned multigrid method, we find that the Newton iteration scheme may fail to converge for some complicated nonlinear eigenvalue models.
In fact, this problem exists widely during the self-consistent field iteration for nonlinear eigenvalue problems.
To derive a convergence result, the mixing scheme (or damping method) is introduced.
The most widely used mixing scheme is Anderson acceleration, which is used to improve the convergence rate for fixed-point iterations.
Anderson acceleration was first designed by D. G. Anderson in 1965 to solve the integral equation \cite{andersion}.
Subsequently, this technique was used to solve various models, such as, the self-consistent field iteration in electronic structure computations \cite{fang},
flow problems \cite{vc1,vc2}, molecular interaction \cite{stasiak}, etc.
Anderson acceleration adopts the mixing scheme to make the fixed-point iterations converge for general equations.
Despite its widespread applications, the first mathematical convergence
result for linear and nonlinear problems did not appear until 2015 in \cite{Toth}.
For nonlinear eigenvalue problems, there is still no strictly theoretical analysis.
In general, the mixing scheme is used in fixed-point iterations to accelerate the convergence rate.
Thus, we use the mixing scheme in the Newton iteration scheme to design a more efficient multigrid method.
By treating the nonlinear eigenvalue problem as a special nonlinear equation, we just solve a series of linearized boundary value problems derived from the
Newton iteration scheme defined
on a multigrid space sequence. Next, we mix the approximate solution in each iteration step to generate a more accurate approximation.
More importantly, we can rigorously {\color{blue}prove convergence} for the residuals of the nonlinear eigenvalue problem after a one-time iteration step.
This may provide some inspiration to prove the theoretical conclusions of the self-consistent field iteration for nonlinear eigenvalue problems.

The remainder of this paper is organized as follows. In Section 2, some preliminaries about the nonlinear eigenvalue problems and the corresponding finite element method are presented.
In Section 3, we use the Newton iteration to solve the nonlinear eigenvalue problems and provide several useful error estimates.
In Section 4, the multigrid method based on Newton iteration is introduced.
In Section 5, we adopt the mixing scheme to provide an improved multigrid algorithm. Numerical experiments are presented in Section 6 to verify the
theoretical results derived in this paper. Some concluding remarks are presented in the last section.

\section{Preliminaries of nonlinear eigenvalue problems}
In this section, we first introduce some standard notations
for Sobolev space $W^{s,p}(\Omega)$
and the associated norm $\|\cdot\|_{s,p,\Omega}$ on a bounded domain $\Omega \in \mathbb R^d \ (d\geq 1)$.
For $p = 2$, we denote $\|\cdot\|_{s,\Omega}=\|\cdot\|_{s,2,\Omega}$,
$H^s(\Omega) = W^{s,2}(\Omega)$ and $H_0^1(\Omega) = \{v \in H^1(\Omega):v|_{\partial \Omega} = 0\}$.
Hereafter, we use $\lesssim$, $\gtrsim$, $\approx$ to denote $\leq$, $\geq$, $=$ with a mesh-independent coefficient for simplicity.

In this paper, we focus on the nonlinear eigenvalue problems arising from the following variational problem:
\begin{eqnarray}\label{min}
I=\inf \{ E(v),v \in H_0^1(\Omega), \int_{\Omega} v^2d\Omega=1\},
\end{eqnarray}
where the energy functional $E$ is defined by
\begin{eqnarray*}
E(v)=\dfrac{1}{2}a(v,v)+\dfrac{1}{2}\int_{\Omega}F(v^2)d\Omega,
\end{eqnarray*}
with
\begin{eqnarray*}
a(u,v) = \int_{\Omega}(\cA\nabla u \cdot \nabla v +Vuv)d\Omega.
\end{eqnarray*}

To simplify the notation, let us define $f(t)=F'(t)$. Then, for all $v \in H_0^1(\Omega)$, $E'(v)=A_vv$ holds with
\begin{eqnarray}
A_v=-{\rm div} (\mathcal A\nabla \cdot)+V+f(v^2).
\end{eqnarray}

Next, applying the Lagrange multiplier method to (\ref{min}) yields the following
nonlinear eigenvalue problem:
%Find $(\lambda,u) \in \mathbb R \times H_0^1(\Omega)$ such that
\begin{equation}\label{Nonlinear_Eigenvalue_Problem}
\left\{
\begin{array}{rcl}
A_uu &=&\lambda u,  \ \ \text{ in } \Omega,\\
u &=&0,  \ \ \ \ \text{ on } \partial\Omega,\\
\int_\Omega u^2 d\Omega &=&1,
\end{array}
\right.
\end{equation}
where $\lambda$ comes from the Lagrange multiplier of the constraint $\|u\|_0^2=1$.

For future theoretical analysis, we assume that the following conditions hold true (see \cite{CancesChakirMaday,ChenZhou}):
\begin{itemize}
\item
$\mathcal A$ is a symmetric positive-definite matrix;
\item
$V \in L^p(\Omega)$ for some $p>\max(1,d/2)$;
\item
$F \in C^1([0,\infty),\mathbb R)\cap C^2((0,\infty),\mathbb R)$ and $F''>0$ on $(0,\infty)$;
\item
$\exists \ 0\leq m <2 \ \text{and}\ C \in \mathbb R_+, \ \text{s.t.}  \ \forall t \geq 0, \ | F'(t)| \leq C(1+t^m)$;
\item
$F''(t)t$ is locally bounded in $[0,\infty)$;
\item
$\exists \ 1<r\leq 2$ and $0\leq s \leq 5-r$ such that $\forall  a >0 , \exists \ C \in \mathbb R_+, \
\text{s.t.} \ \forall \ 0<t_1 \leq a, \ \forall \ t_2 \in \mathbb R,$
 \begin{eqnarray*}
 | f(t_2^2)t_2-f(t_1^2)t_1-2f'(t_1^2)t_1^2(t_2-t_1)| \leq C(1+| t_2|^s)| t_2-t_1|^r;
 \end{eqnarray*}
 \item $\exists \ 0\leq q <1\ \text{and}\  C \in \mathbb R_+, \ \text{s.t.} \ \forall t \geq 0, \ |f'(t)|+|f''(t)t|\leq C(1+t^q)$.
\end{itemize}

In the remainder of this section, we present two lemmas for the nonlinear eigenvalue problem (\ref{Nonlinear_Eigenvalue_Problem}),
and the detailed proof can be found in \cite{CancesChakirMaday}.
\begin{lemma}
There exist $M \in \mathbb R_+$ and $\beta>0$ such that for all $v \in H_0^1(\Omega)$, there holds
\begin{eqnarray}
0\leq ( (A_u-\lambda)v,v)\leq M\|v\|_1^2,
\end{eqnarray}
and
\begin{eqnarray}\label{23}
\beta\|v\|_1^2 \leq ( (E''(u)-\lambda)v,v)\leq M\|v\|_1^2,
\end{eqnarray}
where $E''(u)$ has the following form
\begin{eqnarray}
( E''(u)w,v)=( A_uw,v)+2(f'(u^2)u^2w,v).
\end{eqnarray}
\end{lemma}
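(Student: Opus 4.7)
The plan is to establish the four bounds by combining continuity of the associated bilinear forms with the variational characterization of the ground state and the convexity of $F$. For the two upper bounds, I would expand each quadratic form into the pieces $a(v,v)$, $(f(u^2)v,v)$, $2(f'(u^2)u^2 v, v)$, and $-\lambda(v,v)$, and control each using the structural hypotheses: symmetry and positive-definiteness of $\mathcal A$ for the principal part; $V\in L^p(\Omega)$ with $p>\max(1,d/2)$ together with Sobolev embedding for the potential term; the growth bounds $|F'(t)|\leq C(1+t^m)$ with $m<2$ and $|f'(t)|\leq C(1+t^q)$ with $q<1$ combined with Sobolev embeddings applied to $u\in H^1_0(\Omega)$ for the nonlinear terms; and trivially $\lambda(v,v)\leq \lambda \|v\|_1^2$. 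Summing yields $M\|v\|_1^2$ with $M$ depending only on $u$ and the problem data.

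For the lower bound $0\leq((A_u-\lambda)v,v)$, I would invoke that $u$ is a ground state of \eqref{min}. With $u$ frozen, $A_u$ is a linear self-adjoint operator having $u$ as a (positive) first eigenfunction with smallest eigenvalue $\lambda$. The Rayleigh-quotient characterization then gives $(A_u v,v)\geq \lambda(v,v)$ for every $v\in H_0^1(\Omega)$, which is exactly the stated non-negativity.

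The coercivity $\beta\|v\|_1^2\leq((E''(u)-\lambda)v,v)$ is the main obstacle. The extra contribution $2(f'(u^2)u^2 v,v)$ is non-negative by the strict convexity assumption $F''>0$, but non-negativity alone does not yield an $H^1$ lower bound. My plan is to $L^2$-orthogonally decompose $v=\alpha u+w$ with $(w,u)=0$ and proceed in four steps: (i) by G\aa rding's inequality from ellipticity of $\mathcal A$, $(A_u w,w)\geq \alpha_0\|w\|_1^2 - C\|w\|_0^2$; (ii) on $u^\perp$ the self-adjoint operator $A_u$ has a spectral gap, so $((A_u-\lambda)w,w)\geq (\mu_2-\lambda)\|w\|_0^2$ with $\mu_2>\lambda$ by simplicity of the ground state, which absorbs the lower-order term from (i) and produces genuine $H^1$-coercivity of $A_u-\lambda$ on $u^\perp$; (iii) along the $u$-direction, $((E''(u)-\lambda)(\alpha u),\alpha u)=2\alpha^2\int_\Omega f'(u^2)u^4\,d\Omega\geq c\alpha^2\|u\|_1^2$, since $u$ is a fixed positive function with $f'(u^2)u^2>0$ a.e.; (iv) the cross term reduces to $2\alpha(f'(u^2)u^3,w)$ (because $((A_u-\lambda)u,w)=0$), which Young's inequality absorbs into the two diagonal pieces. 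Summing the contributions recovers $\beta\|v\|_1^2$ on the left.

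The main technical difficulty lies in step (iv): one must ensure that the Young-inequality constants can be chosen small enough relative to both the spectral gap $\mu_2-\lambda$ and the positivity constant $\int_\Omega f'(u^2)u^4\,d\Omega$, and that the gap itself is genuinely positive, which relies on simplicity and isolation of the ground state in the spectrum of $A_u$. Once these quantitative ingredients are in hand, the argument aggregates into the stated $\beta$, so that the full inequality \eqref{23} holds on all of $H_0^1(\Omega)$.
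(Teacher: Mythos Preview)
The paper does not actually prove this lemma: immediately before stating it, the authors write that ``the detailed proof can be found in \cite{CancesChakirMaday}'' and provide no argument of their own. So there is no in-paper proof to compare your attempt against.

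That said, your outline is correct and is essentially the argument carried out in the cited Canc\`es--Chakir--Maday reference. The upper bounds and the nonnegativity of $A_u-\lambda$ are exactly as you describe (the latter relying on positivity of the ground state to identify $\lambda$ as the bottom of the spectrum of the frozen linear operator $A_u$). For the coercivity of $E''(u)-\lambda$, the $L^2$-orthogonal splitting $v=\alpha u+w$ combined with the spectral gap on $u^\perp$ and the strict positivity of $\int_\Omega f'(u^2)u^4\,d\Omega$ is the right mechanism. Two minor remarks: the cross term after expanding $2(f'(u^2)u^2(\alpha u+w),\alpha u+w)$ carries a factor $4\alpha$, not $2\alpha$; and the cleanest way to run your step (iv) is to apply the weighted Cauchy--Schwarz inequality $|(f'(u^2)u^3,w)|\le (\int f'(u^2)u^4)^{1/2}(\int f'(u^2)u^2 w^2)^{1/2}$, so that the Young parameter can be sent to zero and the resulting negative $\|w\|_1^2$-contribution (bounded via the growth assumption on $f'$ and Sobolev embedding) is absorbed into the $H^1$-coercivity from step (ii). With that, the diagonal pieces control $\|v\|_1^2$ through $\|v\|_1\le |\alpha|\,\|u\|_1+\|w\|_1$.
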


Now, we introduce the finite element method for the nonlinear eigenvalue problem (\ref{Nonlinear_Eigenvalue_Problem}). To use the finite element method,
we define the variational form for (\ref{Nonlinear_Eigenvalue_Problem}) as follows: Find $(\lambda,u)\in\mathbb R\times H_0^1(\Omega)$ such that $\|u\|_0=1$ and
\begin{eqnarray}\label{Nonlinear_Eigenvalue_Problemweak}
(\cA\nabla u,\nabla v)+(Vu+f(u^2)u,v)=\lambda (u,v),\quad \forall v\in H_0^1(\Omega).
\end{eqnarray}

Next, we define the finite element space $V_h$ on a triangulation $\mathcal T_h$.
The finite element space $V_h$ is composed of piecewise polynomials such that $V_h\subset H_0^1(\Omega)$ and
\begin{equation}
\lim_{h\rightarrow 0}\inf_{v_h \in V_h} \|w - v_h\|_1 = 0,\ \ \ \forall w\in H_0^1(\Omega).
\end{equation}

Based on the finite element space $V_h$, we can derive the finite element solution for (\ref{Nonlinear_Eigenvalue_Problemweak}) by solving the discrete nonlinear eigenvalue problem as follows:
Find $(\lambda_h,u_h)\in\mathbb R\times V_h$ such that $\|u_h\|_0=1$ and
\begin{equation}\label{GPEfem}
(\cA\nabla u_h,\nabla v_h)+(Vu_h+f(u_h^2)u_h,v_h)=\lambda_h (u_h,v_h),\quad \forall v_h\in V_h.
\end{equation}

The standard error estimates for the finite element approximate eigenpair $(\lambda_h,u_h)$ are described in the following lemma.
\begin{lemma}\label{stalem}
There exists $h_0>0$, such that for all $0<h<h_0$, the following error estimates hold true
%\begin{eqnarray}
%\dfrac{\gamma}{2}\|u_h-u\|_1^2\leq E(u_h)-E(u)\leq\dfrac{M}{2}\|u_h-u\|_1^2 +C\|u_h-u\|_{L^{6/(5-2q)}}
%\end{eqnarray}
\begin{eqnarray}
\|u-u_h\|_1 \lesssim \delta_h(u),
\end{eqnarray}
and
\begin{eqnarray}
| \lambda-\lambda_h| \lesssim \|u-u_h\|_1^2+\|u-u_h\|_{L^{6/(5-2m)}},
\end{eqnarray}
where
\begin{equation}\label{delta}
\delta_h(u) := \inf_{v_h\in V_h}\|u - v_h\|_1.
\end{equation}
%\begin{eqnarray}
%\|u_h-u\|_0^2 \leq C(\|u_h-u\|_0\|u_h-u\|_{L^{6r/(5-s)}}^r+\|u_h-u\|_1\mathop{\min}\limits_{\psi_h \in X_h}\|\psi_{u_h-u}-\psi_h\|_1).
%\end{eqnarray}
%If $F''$ is bounded, there exists $C \in \cR_+$ such that for all $h>0$,
%\begin{eqnarray}
%\dfrac{\gamma}{2}\|u_h-u\|_1^2\leq E(u_h)-E(u)\leq C\|u_h-u\|_1^2.
%\end{eqnarray}
%Lastly, if $0<r+s\leq 3$, then
%\begin{eqnarray}
%\|u_h-u\|_{L^{6r/(5-s)}}^r\leq \|u_h-u\|_0\|u_h-u\|_1^{r-1}.
%\end{eqnarray}
%so this implies
%\begin{eqnarray}
%\|u_h-u\|_0\leq C\|u_h-u\|_1\mathop{\min}\limits_{\psi_h \in X_h}\|\psi_{u_h-u}-\psi_h\|_1.
%\end{eqnarray}
\end{lemma}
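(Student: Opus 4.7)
The plan is to prove the two estimates sequentially, first establishing the $H^1$ bound on $u-u_h$ via an energy comparison and then deducing the eigenvalue bound from an algebraic identity combined with the first estimate. The two fundamental tools are the two-sided estimate on $(A_u-\lambda)$ and the coercivity of $(E''(u)-\lambda)$ provided by the preceding lemma, together with the growth hypotheses on $F$ and $f$ listed after (\ref{Nonlinear_Eigenvalue_Problem}).

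For the $H^1$ estimate, first I would construct a normalized quasi-best approximation $\widetilde u_h\in V_h$ of $u$, obtained for instance by normalizing the $H_0^1$-projection of $u$ onto $V_h$, so that $\|\widetilde u_h\|_0=1$ and $\|u-\widetilde u_h\|_1\lesssim \delta_h(u)$. Since $u_h$ minimizes $E$ over the unit sphere in $V_h$, one has $E(u_h)\leq E(\widetilde u_h)$. Next I would Taylor-expand $E$ about $u$: the first-order term reads $E'(u)(v-u)=\lambda(u,v-u)$, and the identity $(u,v-u)=-\tfrac12\|v-u\|_0^2$, valid for any $v$ with $\|v\|_0=1$, converts it into a quadratic quantity. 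Applying (\ref{23}) to the second-order term yields a lower bound $\tfrac{\beta}{2}\|u-u_h\|_1^2+\text{remainder}\lesssim E(u_h)-E(u)\leq E(\widetilde u_h)-E(u)\lesssim \|u-\widetilde u_h\|_1^2\lesssim\delta_h(u)^2$, whence the first estimate follows once the remainder is absorbed.

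For the eigenvalue estimate, I would test the strong forms $A_uu=\lambda u$ and $A_{u_h}u_h=\lambda_h u_h$ against $u+u_h$, exploit the symmetry of $a(\cdot,\cdot)$ together with $\|u\|_0=\|u_h\|_0=1$, and arrive at the identity
\begin{equation*}
\lambda-\lambda_h=-\bigl((A_u-\lambda)(u-u_h),u-u_h\bigr)+\bigl((f(u^2)-f(u_h^2))u_h,u_h\bigr).
\end{equation*}
The first term is $O(\|u-u_h\|_1^2)$ by the preceding lemma. For the second, using the mean-value form $f(u^2)-f(u_h^2)=f'(\xi^2)(u+u_h)(u-u_h)$ together with the bound $|f'(t)|\lesssim 1+t^q$, H\"older's inequality, and a Sobolev embedding with exponents chosen to match the growth rate $m$ of $F'$, one obtains the desired $\|u-u_h\|_{L^{6/(5-2m)}}$ control.

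The main obstacle will be rigorously controlling the higher-order Taylor remainders, both in the energy expansion of step two and in the linearization of $f(\cdot^2)\cdot$ in step three. This is precisely where the delicate hypothesis with parameters $1<r\leq 2$ and $0\leq s\leq 5-r$ enters: combined with $H^1\hookrightarrow L^p$ for $p$ up to the Sobolev exponent, it yields a remainder estimate of order $\|u-u_h\|_1^r$ with $r>1$, which is genuinely lower order than $\beta\|u-u_h\|_1^2$ and can therefore be absorbed once $h$ is small enough. A secondary subtlety is that $u-u_h$ is only approximately tangent to the unit sphere; the computation $(u,u-u_h)=\tfrac12\|u-u_h\|_0^2$ shows that the non-tangential component contributes only at $O(\|u-u_h\|_0^2)$, the same order as terms already being absorbed, so it causes no additional difficulty.
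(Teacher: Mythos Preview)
The paper does not actually give its own proof of this lemma: the sentence just before Lemma~1 says ``the detailed proof can be found in \cite{CancesChakirMaday}'', and Lemma~\ref{stalem} is simply quoted from that reference without argument. So there is no in-paper proof to compare against; your proposal is being compared to the argument of Canc\`es--Chakir--Maday that the authors cite.

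Your outline is faithful to that argument. The $H^1$ bound is indeed obtained there by comparing energies on the discrete unit sphere, Taylor-expanding $E$ at $u$, converting the first-order term via $(u,v-u)=-\tfrac12\|v-u\|_0^2$, and invoking the coercivity (\ref{23}) of $E''(u)-\lambda$; the higher-order remainder is controlled by the hypothesis with parameters $r,s$ exactly as you say, yielding a term of order $\|u-u_h\|_1^r$ that is absorbed for $h$ small. Your eigenvalue identity
\[
\lambda-\lambda_h=-\bigl((A_u-\lambda)(u-u_h),u-u_h\bigr)+\int_\Omega\bigl(f(u^2)-f(u_h^2)\bigr)u_h^2
\]
is correct (a short computation using $a(u,v)+(f(u^2)u,v)=\lambda(u,v)$ and $\|u\|_0=\|u_h\|_0=1$ verifies it), and the first term is immediately $O(\|u-u_h\|_1^2)$ by the preceding lemma.

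One small inaccuracy: in bounding the nonlinear term you invoke $|f'(t)|\lesssim 1+t^q$, but the exponent $6/(5-2m)$ in the statement comes from the growth bound $|f(t)|\lesssim 1+t^m$ on $f=F'$ itself, not from $f'$. In \cite{CancesChakirMaday} the term $\int(f(u^2)-f(u_h^2))u_h^2$ is split so that the principal part is estimated by the pointwise bound $|f(t)|\le C(1+t^m)$ together with H\"older with conjugate exponents $6/(2m+1)$ and $6/(5-2m)$ and the embedding $H^1\hookrightarrow L^{6/(2m+1)}$; this is what produces the $L^{6/(5-2m)}$ norm. Your mean-value route via $f'$ would give an exponent in $q$ rather than $m$, so you should adjust that step. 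Otherwise the plan is sound.
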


\section{Newton iteration for nonlinear eigenvalue problems}
To introduce Newton iteration for nonlinear eigenvalue problem (\ref{Nonlinear_Eigenvalue_Problemweak}), we first
give some symbols to simplify the description.
Let us define
\begin{eqnarray}
\langle\cF(\lambda,u),v\rangle = (\cA\nabla u,\nabla v)+(Vu+f(u^2)u-\lambda u,v), \quad \forall v\in H_0^1(\Omega).
\end{eqnarray}
The Fr\'{e}chet derivation of $\cF$ with respect to $u$ at $(\lambda,u)$ is denoted by
\begin{eqnarray}
\langle\cF'_u(\lambda,u)w,v\rangle &=& (\cA\nabla w,\nabla v)+((V+f(u^2)-\lambda)w,v)+2(f'(u^2)u^2w,v)\nonumber \\
&=&(E''(u)w,v)-\lambda(w,v).
\end{eqnarray}

Next, we denote the product space $\mathbb R\times H_0^1(\Omega)$ by $X$. Let us define the norms in $X$ as
\begin{eqnarray}
\|(\lambda,u)\|_X=|\lambda |+\|u\|_1 \quad
\text{and} \quad
\|(\lambda,u)\|_0=|\lambda |+\|u\|_0, \quad \forall (\lambda,u)\in X.
\end{eqnarray}
Let us define
\begin{eqnarray}\label{definition of g}
 \langle\cG(\lambda,u),(\mu,v)\rangle=\langle \cF(\lambda,u),v\rangle+\frac{1}{2}\mu(1-\int_{\Omega}u^2 d\Omega),\quad \forall (\mu ,v) \in X.
\end{eqnarray}
Obviously, the nonlinear eigenvalue problem (\ref{Nonlinear_Eigenvalue_Problemweak}) is equivalent to the following nonlinear equation defined
in the product space $X$:
Find $(\lambda,u)\in X$ such that
\begin{eqnarray}\label{exanon}
 \langle\cG(\lambda,u),(\mu,v)\rangle=0,\quad \forall (\mu ,v) \in X.
\end{eqnarray}

Similarly, to solve (\ref{exanon}) using the finite element method, we also introduce a finite dimensional space $X_h:=\mathbb R\times V_h$ as the approximation for $X$.
Then based on $X_h$, the finite element equation (\ref{GPEfem}) is equivalent to the following nonlinear equation defined
in the tensor product space $X_h$: Find $(\lambda_h,u_h)\in X_h$ such that
\begin{eqnarray}\label{nonfem}
 \langle\cG(\lambda_h,u_h),(\mu,v_h)\rangle=0,\quad \forall (\mu ,v_h) \in X_h.
\end{eqnarray}

The Fr\'{e}chet derivation of $\cG$ at $(\lambda,u)$ is denoted by:
\begin{eqnarray}\label{definition of g'}
\langle\cG'(\lambda,u)(\gamma,w),(\mu,v)\rangle=\langle \cF_u'(\lambda,u)w,v\rangle-\gamma(u,v)-\mu(u,w),\quad \forall (\mu ,v) \in X.
\end{eqnarray}

Next, we begin to introduce the Newton iteration for (\ref{nonfem}).
Given an initial value $(\lambda^{(0)},u^{(0)})\in X_h$,
the one step of the Newton iteration is defined as follows, which will generate a more accurate approximate solution:
Find $(\lambda^{(1)},u^{(1)})\in X_h$ such that
\begin{eqnarray}\label{Newton Iteration}
\big\langle \cG'(\lambda^{(0)},u^{(0)})(\lambda^{(1)}-\lambda^{(0)},u^{(1)}-u^{(0)}),(\mu,v_h)\big\rangle
=-\big\langle\cG(\lambda^{(0)},u^{(0)}),(\mu,v_h)\big\rangle,  \forall (\mu,v_h)\in X_h.
\end{eqnarray}
Based on the definitions (\ref{definition of g}) and (\ref{definition of g'}), the linearized boundary value problem (\ref{Newton Iteration})
can be rewritten as follows: Find $(\lambda^{(1)},u^{(1)})\in \mathbb R\times V_h$ such that
for any $(\mu,v_h)\in \mathbb R\times V_h$, there holds
\begin{eqnarray}\label{newton step}
\left\{
\begin{array}{lll}
a(\lambda^{(0)}, u^{(0)};u^{(1)},v_h)+b(u^{(0)};v_h,\lambda^{(1)})
&=&\langle \cF'_{u}(\lambda^{(0)}, u^{(0)})u^{(0)},v_h\rangle \\
&&-\langle \cF(\lambda^{(0)}, u^{(0)}),v_h\rangle-\lambda^{(0)}(u^{(0)},v_h),\\
b(u^{(0)};u^{(1)},\mu)&=&-\mu/2-\mu(u^{(0)},u^{(0)})/2,
\end{array}
\right.
\end{eqnarray}
where
\begin{equation*}
a(\lambda^{(0)}, u^{(0)};u^{(1)},v_h)=\langle \cF'_{u}(\lambda^{(0)}, u^{(0)})u^{(1)},v_h\rangle,\ \ \ \  b(u^{(0)};u^{(1)},\mu) = -\mu(u^{(0)},u^{(1)}).
\end{equation*}

%The isomorphism property of $\cG'$ is analyzed in the following theorem which also guarantees the well-posedness of the above mixed problem.

To guarantee the well-posedness of (\ref{Newton Iteration}) and (\ref{newton step}), an isomorphism property is proved in Theorem \ref{coe}:
\begin{theorem}\label{coe}
Let $(\lambda,u)$ be the exact solution of (\ref{exanon}).
When the mesh size $h$ is sufficiently small, we can derive the following isomorphism properties for the operator $\mathcal G'$:
\begin{eqnarray}\label{coercive of g1}
\mathop {\sup} \limits_{(\mu,v)\in X}\dfrac{\langle\mathcal G'(\lambda,u)(\gamma,w),(\mu,v)\rangle}{\|(\mu,v)\|_X}
&\gtrsim&\|(\gamma,w)\|_X, \quad \forall (\gamma,w)\in X,
\end{eqnarray}
and
\begin{eqnarray}\label{coercive of g2}
\mathop {\sup} \limits_{(\mu,v)\in X_h}\dfrac{\langle\mathcal G'(\lambda,u)(\gamma,w),(\mu,v)\rangle}{\|(\mu,v)\|_X}
&\gtrsim&\|(\gamma,w)\|_X, \quad \forall (\gamma,w)\in X_h.
\end{eqnarray}
For any $(\widehat{\lambda},\widehat{u})\in X$ such that
$\|(\widehat{\lambda}-\lambda,\widehat{u}-u)\|_X$ is sufficiently small, we can also derive
\begin{eqnarray}\label{coercive of g3}
\mathop {\sup} \limits_{(\mu,v)\in X_h}\dfrac{\langle\mathcal G'(\widehat{\lambda},\widehat{u})(\gamma,w),(\mu,v)\rangle}{\|(\mu,v)\|_X}
&\gtrsim& \|(\gamma,w)\|_X, \quad \forall (\gamma,w)\in X_h.
\end{eqnarray}
\end{theorem}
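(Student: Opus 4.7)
The plan is to recognize that $\mathcal{G}'(\lambda,u)$ carries a saddle-point structure. Writing (\ref{definition of g'}) as a $2\times 2$ block system acting on $(\gamma,w)\in X$ paired with $(\mu,v)\in X$, the diagonal block on $w$ is the operator $\mathcal{F}'_u(\lambda,u) = E''(u)-\lambda$, while the off-diagonal blocks are both the bilinear form $b(w,\mu):=-\mu(u,w)$. Thus (\ref{coercive of g1})--(\ref{coercive of g3}) are exactly the Babu\v{s}ka--Brezzi inf-sup conditions for this saddle-point form, and I would verify them by checking the standard hypotheses (coercivity on the kernel of $b$ and inf-sup of $b$) in the continuous, discrete, and perturbed settings.

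For (\ref{coercive of g1}), Lemma~1 gives coercivity of $\mathcal{F}'_u(\lambda,u)$ on \emph{all} of $H_0^1(\Omega)$ since $\langle \mathcal{F}'_u(\lambda,u)w,w\rangle = \langle (E''(u)-\lambda)w,w\rangle \geq \beta\|w\|_1^2$ by (\ref{23}); in particular this covers $\ker b$. The inf-sup of $b$ is immediate by testing with $w=u$: $\sup_{w\in H_0^1(\Omega)}|b(w,\mu)|/\|w\|_1 \geq |\mu|\,\|u\|_0^2/\|u\|_1 = |\mu|/\|u\|_1$. Brezzi's theorem then yields (\ref{coercive of g1}).

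For (\ref{coercive of g2}), the kernel coercivity transfers verbatim since $V_h\subset H_0^1(\Omega)$ and the bound from (\ref{23}) is global. The only substantive point is the discrete inf-sup of $b$ on $V_h$: I would take $w_h$ to be the $L^2$-projection (or Galerkin projection) of $u$ onto $V_h$. The density condition (7) gives $\|w_h-u\|_1\to 0$, whence $(u,w_h)=\|w_h\|_0^2\to 1$ and $\|w_h\|_1\to \|u\|_1$ as $h\to 0$, so for $h$ sufficiently small $|b(w_h,\mu)|/\|w_h\|_1 \geq \tfrac{1}{2\|u\|_1}|\mu|$, giving an $h$-uniform discrete inf-sup constant; Brezzi's theorem in the discrete setting then yields (\ref{coercive of g2}).

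For (\ref{coercive of g3}), I would use a perturbation argument: the bilinear form
\[
\bigl\langle\bigl(\mathcal{G}'(\widehat{\lambda},\widehat{u}) - \mathcal{G}'(\lambda,u)\bigr)(\gamma,w),(\mu,v)\bigr\rangle
\]
splits into $(\lambda-\widehat{\lambda})(w,v)$, $(\gamma(u-\widehat{u},v) + \mu(u-\widehat{u},w))$, and the nonlinear piece $\langle(E''(\widehat{u})-E''(u))w,v\rangle$. The first two are trivially bounded by $C\|(\widehat{\lambda}-\lambda,\widehat{u}-u)\|_X\|(\gamma,w)\|_X\|(\mu,v)\|_X$. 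Since this perturbation of the bilinear form is controlled uniformly by $\|(\widehat{\lambda}-\lambda,\widehat{u}-u)\|_X$, it can be absorbed into the inf-sup constant already established in (\ref{coercive of g2}), provided $\|(\widehat{\lambda}-\lambda,\widehat{u}-u)\|_X$ is sufficiently small; this gives (\ref{coercive of g3}). The main obstacle is the Lipschitz-type bound on $u\mapsto E''(u)$ needed here: it relies on estimating differences of $f(\widehat{u}^2)-f(u^2)$ and $f'(\widehat{u}^2)\widehat{u}^2-f'(u^2)u^2$ in suitable Lebesgue norms, which I would handle using the growth hypotheses on $f'$ and $f''$ (the last two bullet assumptions) together with Sobolev embeddings valid for $d\leq 3$, exactly as in the \emph{a priori} analysis of \cite{CancesChakirMaday}.
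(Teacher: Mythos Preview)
Your proposal is correct, and for (\ref{coercive of g1}) and (\ref{coercive of g3}) it follows the paper's argument almost verbatim: Brezzi theory for the continuous inf-sup, and a Lipschitz estimate on $\mathcal{G}'$ (via the growth bounds on $f'$, $f''$ and Sobolev embeddings) to absorb the perturbation into the constant from (\ref{coercive of g2}).

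The genuine difference is in (\ref{coercive of g2}). You re-run Brezzi's theorem directly on $X_h$: the kernel coercivity is inherited because (\ref{23}) is a global $H^1_0$ estimate, and you manufacture a discrete inf-sup witness by projecting $u$ into $V_h$. The paper instead transfers the already-established continuous inf-sup (\ref{coercive of g1}) to $X_h$ by a projection argument: it defines the $a(\lambda,u;\cdot,\cdot)$-Galerkin projector $P_h$, splits the continuous test function as $(v-P_hv)+P_hv$, uses Galerkin orthogonality to kill the $a$-term against $v-P_hv$, and bounds the remaining cross term $-\gamma(u,v-P_hv)$ by $Ch\|(\gamma,w)\|_X$ via an Aubin--Nitsche duality estimate; this small term is then absorbed for $h$ small. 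Your route is shorter and exploits the fortunate fact that (\ref{23}) gives coercivity on \emph{all} of $H^1_0$, not just on $\ker b$; the paper's route is the more transferable technique (essentially a Fortin-operator argument) that would survive if one only had kernel coercivity. Both yield mesh-independent constants.
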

\begin{proof}
Firstly, proving (\ref{coercive of g1}) is equivalent to prove that for any $(\tau,f)\in \mathbb R\times H^{-1}(\Omega)$, the following equation
%\begin{eqnarray}
%\cG'(\lambda,u)(\gamma,w)=(\tau,f)
%\end{eqnarray}
is uniquely solvable: Find $(\gamma,w)\in \mathbb R\times H_0^1(\Omega)$ such that
%From (\ref{definition of g'}), we obtain that (\ref{linear equation}) can be rewritten as
\begin{eqnarray}\label{linear equation}
\left\{
\begin{array}{ll}
a(\lambda, u;w,v)+ b(u;v,\gamma)=(f,v),  &\forall v\in H_0^1(\Omega),\\
b(u;w,\mu)=\mu\tau, &\forall \mu \in \mathbb R.
\end{array}
\right.
\end{eqnarray}
According to the Brezzi theory \cite{Fortin}, we just need to prove the following two conclusions:\\
(1) The following equation
\begin{eqnarray}\label{unique of a}
a(\lambda, u; w,v)=(f,v), \quad \forall v\in V_0,
\end{eqnarray}
is uniquely solvable in $V_0:=\{v\in H_0^1(\Omega): b(u;v,\mu)=0,\ \forall \mu \in \R\}$.\\
(2) The following inf-sup condition holds true
\begin{eqnarray}\label{infsup}
\inf_{\mu\in \R}\sup_{v\in H_0^1(\Omega)}\dfrac{b(u;v,\mu)}{\|v\|_1|\mu|}\geq k_b,
\end{eqnarray}
for some constant $k_b>0$.

Firstly, from (\ref{23}),  we can easily find that (\ref{unique of a}) is uniquely solvable.
Next, since $\|u\|_0=1$, taking $v=-\mu u$ leads to
\begin{eqnarray*}
\inf_{\mu\in \R}\sup_{v\in H_0^1(\Omega)}\dfrac{ b(u;v,\mu)}{\|v\|_1|\mu|} =
\frac{1}{\|u\|_1}=: k_b.
\end{eqnarray*}
Then we derive the inf-sup condition (\ref{infsup}).
Thus, we can derive the well-posedness of (\ref{linear equation}) based on Brezzi theory, which further yields (\ref{coercive of g1}).

Next, we begin to prove the second formula (\ref{coercive of g2}).
Based on (\ref{23}), let us define the projection $P_h: H_0^1(\Omega)\rightarrow V_h$ by
\begin{eqnarray}\label{ph}
a(\lambda, u;w,v-P_hv)=0, \quad \forall w\in V_h,\ \forall v\in H_0^1(\Omega).
\end{eqnarray}
Using (\ref{23}) and (\ref{ph}), we can derive
\begin{eqnarray}\label{projection}
\|P_hv\|_1 \lesssim \|v\|_1, \quad\forall v \in H_0^1(\Omega),
\end{eqnarray}
and
\begin{eqnarray}\label{Aubin-Nitsche}
\|v-P_hv\|_0\lesssim h\|v-P_hv\|_1\lesssim h\|v\|_1, \quad\forall v \in H_0^1(\Omega).
\end{eqnarray}
Combining (\ref{coercive of g1}), (\ref{ph}), (\ref{projection}) and (\ref{Aubin-Nitsche}) leads to
\begin{eqnarray}\label{isomorphism}
 \|(\gamma,w)\|_X &\lesssim&  \mathop {\sup} \limits_{(\mu,v)\in X}\dfrac{\langle\cG'(\lambda,u)(\gamma,w),(\mu,v)\rangle}{\|(\mu,v)\|_X}\nonumber\\
 &=&\mathop {\sup} \limits_{(\mu,v)\in X}\dfrac{\langle\cG'(\lambda,u)(\gamma,w),(0,v-P_hv)\rangle+
 \langle\cG'(\lambda,u)(\gamma,w),(\mu,P_hv)\rangle
 }{\|(\mu,v)\|_X}\nonumber\\
 &=&\mathop {\sup} \limits_{(\mu,v)\in X}\dfrac{a(\lambda, u;w,v-P_hv)
 -\gamma(u,v-P_hv)+\langle\cG'(\lambda,u)(\gamma,w),(\mu,P_hv)\rangle}{\|(\mu,v)\|_X}\nonumber\\
 &=&\mathop {\sup} \limits_{(\mu,v)\in X}\dfrac{-\gamma(u,v-P_hv)+\langle\cG'(\lambda,u)(\gamma,w),(\mu,P_hv)\rangle
 }{\|(\mu,v)\|_X}\nonumber\\
 &\lesssim&\mathop {\sup} \limits_{(\mu,v)\in X}\dfrac{\gamma \|u\|_0\|v-P_hv\|_0+\langle\cG'(\lambda,u)(\gamma,w),(\mu,P_hv)\rangle}{\|(\mu,v)\|_X}\nonumber\\
 &\lesssim&\mathop {\sup} \limits_{(\mu,v)\in X}\dfrac{\gamma h\|v\|_1+\langle\cG'(\lambda,u)(\gamma,w),(\mu,P_hv)\rangle}{\|(\mu,v)\|_X}\nonumber\\
 &\lesssim&   h\|(\gamma,w)\|_X+ \mathop {\sup} \limits_{(\mu,v)\in X}\dfrac{\langle\cG'(\lambda,u)(\gamma,w),(\mu,P_hv)\rangle}{\|(\mu,v)\|_X}.
 \end{eqnarray}
Thus, when the mesh size $h$ is small enough, we can obtain
\begin{eqnarray}\label{seco}
 \|(\gamma,w)\|_X &\lesssim&\mathop {\sup} \limits_{(\mu,v)\in X}\dfrac{\langle\cG'(\lambda,u)(\gamma,w),(\mu,P_hv)\rangle}{\|(\mu,v)\|_X}\nonumber\\
 &\lesssim&\mathop {\sup} \limits_{(\mu,v)\in X}\dfrac{\langle\cG'(\lambda,u)(\gamma,w),(\mu,P_hv)\rangle}{\|(\mu,P_hv)\|_X}\nonumber\\
 &\lesssim&\mathop {\sup} \limits_{(\mu,v)\in X_h}\dfrac{\langle\cG'(\lambda,u)(\gamma,w),(\mu,v)\rangle}{\|(\mu,v)\|_X},
 \end{eqnarray}
which is just the second formula (\ref{coercive of g2}).

For the last formula (\ref{coercive of g3}), we first prove a Lipschitz continuity for $\cG'$ based on the assumptions presented in Section 2,
H\"{o}lder inequality and the imbedding theorem (see \cite{Adams}):
\begin{eqnarray*}
&&\langle\cG'(\lambda,u)(\gamma,w),(\mu,v)\rangle-\langle\cG'(\widehat{\lambda},\widehat{u})(\gamma,w),(\mu,v)\rangle \\
&=&\big((f(u^2)-f(\widehat u^2))w,v\big)+((\widehat\lambda-\lambda)w,v)\\
&&+2\big((f'(u^2)u^2-f'(\widehat u^2)\widehat u^2)w,v\big) -\gamma(\widehat u-u,v)-\mu(\widehat u-u,w)\\
&=&2\big(f'(\xi^2)\xi(u-\widehat u)w,v\big)+((\widehat\lambda-\lambda)w,v)\\
&&+4\big((f''(\xi^2)\xi^3+f'(\xi^2)\xi)(u-\widehat u)w,v\big) -\gamma(\widehat u-u,v)-\mu(\widehat u-u,w)\\
&\lesssim&\big((1+\xi^{2q})\xi(u-\widehat u)w,v\big)+((\widehat\lambda-\lambda)w,v)\\
&&+\big((1+\xi^{2q})\xi(u-\widehat u)w,v\big) -\gamma(\widehat u-u,v)-\mu(\widehat u-u,w)\\
&\lesssim&\|\xi\|_{0,4}\|u-\widehat u\|_{0,4}\|w\|_{0,4}\|v\|_{0,4}+\|\xi^{2q}\|_{0,3/q}\|\xi\|_{0,6}\|u-\widehat u\|_{0,6}\|w\|_{0,6}\|v\|_{0,6/(3-2q)}\\
&&+|\widehat\lambda-\lambda|\|w\|_0\|v\|_0 +|\gamma|\|\widehat u-u\|_0\|v\|_0+|\mu|\|\widehat u-u\|_0\|w\|_0\\
&\lesssim&\|\xi\|_1\|u-\widehat u\|_1\|w\|_1\|v\|_1+\|\xi\|_{1}^{2q}\|\xi\|_1\|u-\widehat u\|_1\|w\|_1\|v\|_1\\
&&+|\widehat\lambda-\lambda|\|w\|_0\|v\|_0 +|\gamma|\|\widehat u-u\|_0\|v\|_0+|\mu|\|\widehat u-u\|_0\|w\|_0,
\end{eqnarray*}
where $\xi=\tau u+(1-\tau)\widehat u$ with $\tau\in(0,1)$.

Thus, we can derive
\begin{eqnarray}\label{lip}
&&\langle\cG'(\lambda,u)(\gamma,w),(\mu,v)\rangle-\langle\cG'(\widehat{\lambda},\widehat{u})(\gamma,w),(\mu,v)\rangle \nonumber\\
&\lesssim&\|(\lambda-\widehat{\lambda},u-\widehat{u})\|_X\|(\gamma,w)\|_X\|(\mu,v)\|_X.
\end{eqnarray}

Let $\varepsilon$ be a sufficiently small constant and $\|(\lambda-\widehat{\lambda},u-\widehat{u})\|_X \leq \varepsilon$.
Based on (\ref{coercive of g2}) and (\ref{lip}), for any $(\gamma,w)\in X_h$, we have
\begin{eqnarray*}
 \|(\gamma,w)\|_X &\lesssim&  \mathop {\sup} \limits_{(\mu,v)\in X_h}\dfrac{\varepsilon \|(\gamma,w)\|_X\|(\mu,v)\|_X+\langle\cG'(\widehat{\lambda},\widehat{u})(\gamma,w),(\mu,v)\rangle }{\|(\mu,v)\|_X}\\
 &\lesssim& \varepsilon \|(\gamma,w)\|_X +\mathop {\sup} \limits_{(\mu,v)\in X_h}\dfrac{\langle\cG'(\widehat{\lambda},\widehat{u})(\gamma,w),(\mu,v)\rangle}{\|(\mu,v)\|_X}.
 \end{eqnarray*}
Then $(\ref{coercive of g3})$ can be derived using the same way as the proof process in (\ref{seco}).
%there exists a constant $\widehat{\theta}$, such that
%\begin{eqnarray*}
% \|(\gamma,w)\|_X &\leq&  \widehat{\theta}\mathop {\sup} \limits_{(\sigma,\eta)\in X_h}\dfrac{\langle\cG'(\widehat{\lambda},\widehat{u})(\gamma,w),(\sigma,\eta)\rangle}{\|(\sigma,\eta)\|_X}.
%% &\lesssim&\mathop {\sup} \limits_{(\sigma,\eta)\in X_h}\dfrac{\langle\cG'(\widehat{\lambda},\widehat{u})(\gamma,w),(\sigma,\eta)\rangle }{\|(\sigma,\eta)\|_X}.
% \end{eqnarray*}
This completes the proof.
\end{proof}

Next, in Theorem \ref{residual estiamte}, we provide the {\color{blue}estimate} for the residual of the Newton iteration scheme (\ref{Newton Iteration}).
\begin{theorem}\label{residual estiamte}
For any $(\lambda^{(0)},u^{(0)})\in X, (\lambda^{(1)},u^{(1)})\in X$, we can derive the following expansion
\begin{eqnarray}\label{error of Newton Iteration}
\langle \cG(\lambda^{(1)},u^{(1)}),(\sigma,\eta)\rangle&=&\langle\cG(\lambda^{(0)},u^{(0)}),(\sigma,\eta)\rangle
+\langle \cG'(\lambda^{(0)},u^{(0)})(\lambda^{(1)}-\lambda^{(0)},u^{(1)}-u^{(0)}),(\sigma,\eta)\rangle \nonumber\\
&&+~R\big((\lambda^{(0)},u^{(0)}),(\lambda^{(1)},u^{(1)}),(\sigma,\eta)\big),\quad\quad\quad\quad\quad\quad\forall (\sigma,\eta)\in X,
\end{eqnarray}
where the residual $R\big((\lambda^{(0)},u^{(0)}),(\lambda^{(1)},u^{(1)}),(\sigma,\eta)\big)$ satisfies the following estimate
\begin{eqnarray*}
\big| R\big((\lambda^{(0)},u^{(0)}),(\lambda^{(1)},u^{(1)}),(\sigma,\eta)\big)\big| \lesssim \|(\lambda^{(0)}-\lambda^{(1)},u^{(0)}-u^{(1)})\|_X^2\|(\sigma,\eta)\|_X.
\end{eqnarray*}
\end{theorem}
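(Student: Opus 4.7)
The plan is to read the identity as a second-order Taylor expansion of $\cG$ around $(\lambda^{(0)},u^{(0)})$ and to bound the resulting quadratic remainder term by term. Writing $\delta\lambda:=\lambda^{(1)}-\lambda^{(0)}$ and $\delta u:=u^{(1)}-u^{(0)}$ and substituting the definitions (\ref{definition of g}) and (\ref{definition of g'}) reduces $R$ to a sum of mismatches produced by four separate pieces of $\cG$: (a) the diffusion--potential contribution $(\cA\nabla u,\nabla v)+(Vu,v)$, which is linear in $u$ and hence contributes nothing; (b) the bilinear coupling $-\lambda(u,v)$, whose exact expansion leaves only the cross term $-\delta\lambda\,(\delta u,\eta)$; (c) the quadratic constraint piece $\tfrac12\mu(1-\int_\Omega u^2\,d\Omega)$, whose exact expansion leaves $-\tfrac12\sigma\|\delta u\|_0^2$; and (d) the genuinely nonlinear term $(f(u^2)u,\eta)$.

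Pieces (a)--(c) are handled immediately. For (b), Cauchy--Schwarz combined with $\|\cdot\|_0\le\|\cdot\|_1$ gives $|\delta\lambda\,(\delta u,\eta)|\lesssim|\delta\lambda|\,\|\delta u\|_1\,\|\eta\|_1$; for (c), $|\sigma|\,\|\delta u\|_0^2\lesssim|\sigma|\,\|\delta u\|_1^2$. Both are dominated by $\|(\delta\lambda,\delta u)\|_X^2\,\|(\sigma,\eta)\|_X$, using the inequality $|a|\,\|b\|_1\le\tfrac12(|a|+\|b\|_1)^2$.

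The main obstacle is piece (d). I would express the pointwise residual as a one-dimensional Taylor remainder of $g(t):=f(t^2)t$, writing
\begin{equation*}
f((u^{(1)})^2)u^{(1)}-f((u^{(0)})^2)u^{(0)}-\bigl(f((u^{(0)})^2)+2f'((u^{(0)})^2)(u^{(0)})^2\bigr)\delta u=\int_0^1(1-\tau)\,g''\bigl(u^{(0)}+\tau\delta u\bigr)\,(\delta u)^2\,d\tau,
\end{equation*}
and noting that $g''(\xi)=6\xi f'(\xi^2)+4\xi^3 f''(\xi^2)$. The last bullet of the structural assumptions on $F$ in Section~2 then gives $|g''(\xi)|\lesssim|\xi|(1+|\xi|^{2q})$ with $0\le q<1$. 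Testing the remainder against $\eta$ and performing a H\"older splitting exactly analogous to the one used in the derivation of (\ref{lip}), followed by the Sobolev embeddings $H_0^1(\Omega)\hookrightarrow L^p(\Omega)$ at the needed exponents, upgrades this to $\|\delta u\|_1^2\,\|\eta\|_1$, with a constant depending on $\|u^{(0)}\|_1$ and $\|u^{(1)}\|_1$ but otherwise harmless.

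Summing the four contributions and absorbing $\|\delta u\|_1\le\|(\delta\lambda,\delta u)\|_X$ and $\|\eta\|_1\le\|(\sigma,\eta)\|_X$ collapses everything into the single bound $\|(\delta\lambda,\delta u)\|_X^2\,\|(\sigma,\eta)\|_X$ asserted in the theorem. I expect the H\"older bookkeeping for the nonlinear Taylor remainder to be the only genuinely delicate step; the linear, bilinear, and constraint pieces follow essentially automatically, since each of them is at most quadratic in $(\delta\lambda,\delta u)$ to begin with.
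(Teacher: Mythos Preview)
Your proposal is correct and follows essentially the same approach as the paper. The paper packages the argument by defining a single scalar function $t\mapsto\langle\cG((\lambda^{(0)},u^{(0)})+t(\delta\lambda,\delta u)),(\sigma,\eta)\rangle$ and computing its second derivative all at once, whereas you split $\cG$ into its four constituent pieces first and Taylor-expand each; the resulting remainder terms (your pieces (b), (c), and the $g''(\xi)=6\xi f'(\xi^2)+4\xi^3 f''(\xi^2)$ term from (d)) coincide exactly with the terms of the paper's $\eta''(t)$, and both of you bound them via the same growth assumption $|f'(t)|+|f''(t)t|\lesssim 1+t^q$ together with the same H\"older splittings and Sobolev embeddings.
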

\begin{proof}
We prove this theorem through the Taylor expansion. Let us define
\begin{eqnarray}
\eta(t)=\big\langle \cG\big((\lambda^{(0)},u^{(0)})+t\{(\lambda^{(1)},u^{(1)})-(\lambda^{(0)},u^{(0)})\}\big),(\sigma,\eta)\big\rangle, \ \ \ t \in [0,1].
\end{eqnarray}
Obviously,
\begin{eqnarray}
\eta(0)=\langle \cG(\lambda^{(0)},u^{(0)}),(\sigma,\eta)\rangle \ \ \ \text{and} \ \ \ \eta(1)=\langle \cG(\lambda^{(1)},u^{(1)}),(\sigma,\eta)\rangle.
\end{eqnarray}

The first order derivative and the second order derivative of $\eta(t)$ with respect to $t$ are listed as follows:
\begin{eqnarray}\label{firstderiv}
\eta'(t)&=&\big\langle \cG'((\lambda^{(0)},u^{(0)})+t\{(\lambda^{(1)},u^{(1)})-(\lambda^{(0)},u^{(0)})\})((\lambda^{(1)},u^{(1)})-(\lambda^{(0)},u^{(0)})),(\sigma,\eta)\big\rangle\nonumber\\
&=& \big\langle \cF_u'\big((\lambda^{(0)},u^{(0)})+t\{(\lambda^{(1)},u^{(1)})-(\lambda^{(0)},u^{(0)})\}\big)(u^{(1)}-u^{(0)}),\eta\big\rangle \nonumber\\
&&-(\lambda^{(1)}-\lambda^{(0)})(u^{(0)}+t(u^{(1)}-u^{(0)}),\eta)-\sigma(u^{(0)}+t(u^{(1)}-u^{(0)}),u^{(1)}-u^{(0)})\nonumber\\
&=& (\mathcal A\nabla(u^{(1)}-u^{(0)}),\nabla\eta) +\big(  (u^{(0)}+f((u^{(0)}+t(u^{(1)}-u^{(0)}))^2))(u^{(1)}-u^{(0)}),\eta  \big) \nonumber\\
&& - \big((\lambda^{(0)}+t(\lambda^{(1)}-\lambda^{(0)}))(u^{(1)}-u^{(0)}),\eta  \big) \nonumber\\
&&+2\big(f'((u^{(0)}+t(u^{(1)}-u^{(0)}))^2)(u^{(0)}+t(u^{(1)}-u^{(0)}))^2(u^{(1)}-u^{(0)}),\eta\big)\nonumber \\
&&-(\lambda^{(1)}-\lambda^{(0)})(u^{(0)}+t(u^{(1)}-u^{(0)}),\eta)-\sigma(u^{(0)}+t(u^{(1)}-u^{(0)}),u^{(1)}-u^{(0)}),
\end{eqnarray}
and
\begin{eqnarray}\label{secondderiv}
\eta''(t)&=&-2(\lambda^{(1)}-\lambda^{(0)})(u^{(1)}-u^{(0)},\eta)-\sigma (u^{(1)}-u^{(0)},u^{(1)}-u^{(0)}) \nonumber\\
&&+6\big(f'\big((u^{(0)}+t(u^{(1)}-u^{(0)}))^2\big)\big(u^{(0)}+t(u^{(1)}-u^{(0)})\big)(u^{(1)}-u^{(0)})^2,\eta\big)\nonumber\\
&&+4\big\langle f''\big((u^{(0)}+t(u^{(1)}-u^{(0)}))^2\big)\big(u^{(0)}+t(u^{(1)}-u^{(0)})\big)^3(u^{(1)}-u^{(0)})^2,\eta\big\rangle.
\end{eqnarray}

For the nonlinear terms involved in (\ref{secondderiv}), we have the following estimates
\begin{eqnarray}\label{f1}
&&\big|\big(f'\big((u^{(0)}+t(u^{(1)}-u^{(0)}))^2\big)\big(u^{(0)}+t(u^{(1)}-u^{(0)})\big)(u^{(1)}-u^{(0)})^2,\eta\big)\big|\nonumber\\
&\lesssim& \big|\big(\big(1+(u^{(0)}+t(u^{(1)}-u^{(0)}))^{2q}\big)\big(u^{(0)}+t(u^{(1)}-u^{(0)})\big)(u^{(1)}-u^{(0)})^2,\eta\big)\big|\nonumber\\
&\lesssim& \big|\big((u^{(0)}+t(u^{(1)}-u^{(0)}))(u^{(1)}-u^{(0)})^2,\eta\big)\nonumber\\
&& + \big((u^{(0)}+t(u^{(1)}-u^{(0)}))^{2q}(u^{(0)}+t(u^{(1)}-u^{(0)}))(u^{(1)}-u^{(0)})^2,\eta\big)\big|\nonumber\\
&\lesssim& \|u^{(0)}+t(u^{(1)}-u^{(0)})\|_{0,4}\|u^{(1)}-u^{(0)}\|_{0,4}^2\|\eta\|_{0,4}\nonumber\\
&&+\|(u^{(0)}+t(u^{(1)}-u^{(0)}))^{2q}\|_{0,3/q}\|u^{(0)}+t(u^{(1)}-u^{(0)})\|_{0,6}\|u^{(1)}-u^{(0)}\|_{0,6}^2\|\eta\|_{0,6/(3-2q)} \nonumber\\
&\lesssim& \|u^{(0)}+t(u^{(1)}-u^{(0)})\|_{0,4}\|u^{(1)}-u^{(0)}\|_{0,4}^2\|\eta\|_{0,4}\nonumber\\
&&+\|u^{(0)}+t(u^{(1)}-u^{(0)})\|_{0,6}^{2q}\|u^{(0)}+t(u^{(1)}-u^{(0)})\|_{0,6}\|u^{(1)}-u^{(0)}\|_{0,6}^2\|\eta\|_{0,6/(3-2q)} \nonumber\\
&\lesssim& \|u^{(0)}+t(u^{(1)}-u^{(0)})\|_1\|u^{(1)}-u^{(0)}\|_1^2\|\eta\|_1\nonumber\\
&&+\|u^{(0)}+t(u^{(1)}-u^{(0)})\|_1^{2q}\|u^{(0)}+t(u^{(1)}-u^{(0)})\|_1\|u^{(1)}-u^{(0)}\|_1^2\|\eta\|_1\nonumber\\
&\lesssim& \|u^{(1)}-u^{(0)}\|_1^2\|\eta\|_1,
\end{eqnarray}
and
\begin{eqnarray}\label{f2}
&&\big|\big\langle f''\big((u^{(0)}+t(u^{(1)}-u^{(0)}))^2\big)\big(u^{(0)}+t(u^{(1)}-u^{(0)})\big)^3(u^{(1)}-u^{(0)})^2,\eta\big\rangle\big|\nonumber\\
&\lesssim& \big|\big((1+(u^{(0)}+t(u^{(1)}-u^{(0)}))^{2q}\big)\big(u^{(0)}+t(u^{(1)}-u^{(0)})\big)(u^{(1)}-u^{(0)})^2,\eta)\big|\nonumber\\
&\lesssim& \|u^{(1)}-u^{(0)}\|_1^2\|\eta\|_1,
\end{eqnarray}
where the H\"{o}lder inequality and the imbedding theorem are used.

Based on (\ref{secondderiv}), (\ref{f1}) and (\ref{f2}), we can derive
\begin{eqnarray}\label{etest}
|\eta''(t)| &\lesssim& |\lambda^{(1)}-\lambda^{(0)}|\|u^{(1)}-u^{(0)}\|_0\|\eta\|_0 + |\sigma| \|u^{(1)}-u^{(0)}\|_0^2 +\|u^{(1)}-u^{(0)}\|_1^2\|\eta\|_1 \nonumber\\
&\lesssim& \|(\lambda^{(0)}-\lambda^{(1)},u^{(0)}-u^{(1)})\|_X^2\|(\sigma,\eta)\|_X.
\end{eqnarray}

Thus, using (\ref{etest}) and the following  Taylor expansion:
$$\eta(1)=\eta(0)+\eta'(0)+\int_{0}^{1}\eta''(t)(1-t)dt,$$
we can derive
\begin{eqnarray*}
\langle \cG(\lambda^{(1)},u^{(1)}),(\sigma,\eta)\rangle&=&\langle \cG(\lambda^{(0)},u^{(0)}),(\sigma,\eta)\rangle+\langle \cG'(\lambda^{(0)},u^{(0)})((\lambda^{(1)},u^{(1)})-(\lambda^{(0)},u^{(0)})),(\sigma,\eta)\rangle \\
&&+R((\lambda^{(0)},u^{(0)}),(\lambda^{(1)},u^{(1)}),(\sigma,\eta))
\end{eqnarray*}
with the residual
\begin{eqnarray*}
| R((\lambda^{(0)},u^{(0)}),(\lambda^{(1)},u^{(1)}),(\sigma,\eta))| \lesssim \|(\lambda^{(0)}-\lambda^{(1)},u^{(0)}-u^{(1)})\|_X^2\|(\sigma,\eta)\|_X.
\end{eqnarray*}
Then the proof is completed.
\end{proof}

\section{Multigrid method for nonlinear eigenvalue problems based on Newton iteration}
In this section, a novel multigrid method is proposed for solving the nonlinear eigenvalue problems based on the Newton iteration.
To design the multigrid method, let us construct a nested {\color{blue} sequence of finite element spaces} based on a nested multigrid sequence in the first step.
The sequence of meshes is denoted by $\{\mathcal T_{h_k}\}_{k=1}^n$. For any two consecutive meshes $\mathcal T_{h_k}$ and $\mathcal T_{h_{k-1}} (k\geq 2)$,
$\mathcal T_{h_k}$ is produced through a one-time uniform refinement form $\mathcal T_{h_{k-1}}$. The corresponding mesh sizes satisfy the following conditions:
\begin{eqnarray}
 h_k\approx \dfrac{1}{\beta}h_{k-1},\quad k=2,\cdots,n,
\end{eqnarray}
where the refinement index $\beta>1$. Meanwhile, the following approximate relationship holds true:
\begin{eqnarray}
\delta_{h_k}(u)\approx \dfrac{1}{\beta}\delta_{h_{k-1}}(u),\quad k=2,\cdots,n.
\end{eqnarray}
Based on the multigrid sequence $\{\mathcal T_{h_k}\}_{k=1}^n$, we denote the corresponding {\color{blue} sequence of finite element spaces} by:
\begin{eqnarray}
V_{h_{1}}\subset V_{h_{2}}\subset \cdots \subset V_{h_n}\subset H_0^1(\Omega),
\end{eqnarray}
and denote the {\color{blue}sequence of tensor product finite element spaces} by:
\begin{eqnarray}
X_{h_{1}}\subset X_{h_{2}}\subset \cdots \subset X_{h_n}\subset X.
\end{eqnarray}

%In this paper, we denote by $(\lambda_{h_k},u_{h_k})$ the standard finite element solution.

\subsection{One step of the multigrid method}
To describe the multigrid method more clearly, we first introduce how to perform one step
of the Newton iteration.

For two consecutive finite element spaces $X_{h_k}$ and $X_{h_{k+1}}$, assume that we have obtained an approximate solution $(\lambda^{h_{k}},u^{h_{k}})\in X_{h_k}$,
Algorithm \ref{onestep} shows the procedure for obtaining a new approximate solution $(\lambda^{h_{k+1}},u^{h_{k+1}})\in X_{h_{k+1}}$.

\begin{algorithm}\label{onestep}
One step of the multigrid method
\begin{enumerate}
\item Assume that we have obtained an approximate solution $(\lambda^{h_{k}},u^{h_{k}})\in X_{h_{k}}$.
\item Solve the linearized equation derived from Newton iteration in the finite element space $X_{h_{k+1}}$:
Find $(\lambda^{h_{k+1}},u^{h_{k+1}})\in X_{h_{k+1}}$ such that for any $(\mu,v_{h_{k+1}})\in X_{h_{k+1}}$, there holds
\begin{eqnarray}\label{dgd}
&&\langle \cG'(\lambda^{h_k},u^{h_k})(\lambda^{h_{k+1}},u^{h_{k+1}}),(\mu,v_{h_{k+1}})\rangle \nonumber\\
&=& -\langle \cG(\lambda^{h_{k}},u^{h_k}),(\mu,v_{h_{k+1}})\rangle
+\langle \cG'(\lambda^{h_k},u^{h_k})(\lambda^{h_k},u^{h_k}),(\mu,v_{h_{k+1}})\rangle.
\end{eqnarray}
The equation (\ref{dgd}) can be solved by the mixed finite element method in the following form: Find $(\lambda^{h_{k+1}},u^{h_{k+1}})\in \mathbb R\times  V_{h_{k+1}}$
such that for any $(\mu,v_{h_{k+1}})\in\mathbb R\times  V_{h_{k+1}}$, there holds
\begin{eqnarray*}
\left\{
\begin{array}{lll}
a(\lambda^{h_k}, u^{h_k};u^{h_{k+1}},v_{h_{k+1}})+b(u^{h_k};v_{h_{k+1}},\lambda^{h_{k+1}})
&=&\langle \cF_u'(\lambda^{h_k},u^{h_k})u^{h_k},v_{h_{k+1}}\rangle  \nonumber\\
&&-\langle \cF(\lambda^{h_k}, u^{h_k}),v_{h_{k+1}}\rangle-\lambda^{h_k}(u^{h_k},v_{h_{k+1}}),\\
b(u^{h_k};u^{h_{k+1}},\mu)&=&-\mu/2-\mu(u^{h_k},u^{h_k})/2.
\end{array}
\right.
\end{eqnarray*}
%\item Solve equation $(\ref{one correction bvp})$ to obtain an eigenpair approximation $(\lambda^{h_{k+1}},u^{h_{k+1}})$
%    satisfying $\|(\lambda^{h_{k+1}}-\widehat{\lambda}^{h_{k+1}},u^{h_{k+1}}-\widehat{u}^{h_{k+1}})\|_X\lesssim \eta_a(V_{h_{k+1}})\delta_{h_{k+1}}(u) $.
\end{enumerate}
  In order to simplify the notation, we use the following symbol to denote the above two solving steps:
  \begin{eqnarray*}
   (\lambda^{h_{k+1}},u^{h_{k+1}})=Newton_{-}Iteration(\lambda^{h_k}, u^{h_k},X_{h_{k+1}}).
  \end{eqnarray*}
 \end{algorithm}

Next, we can prove that the new approximate solution $(\lambda^{h_{k+1}},u^{h_{k+1}})\in X_{h_{k+1}}$
derived by Algorithm  \ref{onestep} has a better accuracy than $(\lambda^{h_{k}},u^{h_{k}})\in X_{h_k}$.
The detailed {\color{blue}conclusion is} presented in Theorem \ref{One Correction Step estimate}.

\begin{theorem}\label{One Correction Step estimate}
After implementing Algorithm $\ref{onestep}$, the new approximate solution $(\lambda^{h_{k+1}},u^{h_{k+1}})$ has the following error estimate
 \begin{eqnarray*}
 &&\|(\lambda-\lambda^{h_{k+1}},u-u^{h_{k+1}})\|_X \lesssim \delta_{h_{k+1}}(u)+\|(\lambda-\lambda^{h_k},u-u^{h_k})\|_X^2.
 \end{eqnarray*}
\end{theorem}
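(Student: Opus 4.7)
The plan is to compare the multigrid iterate $(\lambda^{h_{k+1}},u^{h_{k+1}})$ with a carefully chosen benchmark $(\lambda_I,u_I)\in X_{h_{k+1}}$, where I would take $\lambda_I=\lambda$ and let $u_I\in V_{h_{k+1}}$ be a best approximation realizing $\delta_{h_{k+1}}(u)$. By the triangle inequality it suffices to estimate $\|(\lambda_I-\lambda^{h_{k+1}},u_I-u^{h_{k+1}})\|_X$, since the other piece is already bounded by $\delta_{h_{k+1}}(u)$. Because $(\lambda_I-\lambda^{h_{k+1}},u_I-u^{h_{k+1}})$ lies in $X_{h_{k+1}}$, and the previous iterate $(\lambda^{h_k},u^{h_k})$ can be assumed close to $(\lambda,u)$ (this will be enforced by induction), the discrete isomorphism (\ref{coercive of g3}) at $(\widehat\lambda,\widehat u)=(\lambda^{h_k},u^{h_k})$ reduces the whole theorem to controlling the numerator $\langle\mathcal G'(\lambda^{h_k},u^{h_k})(\lambda_I-\lambda^{h_{k+1}},u_I-u^{h_{k+1}}),(\mu,v)\rangle$ for $(\mu,v)\in X_{h_{k+1}}$.

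Next I would rewrite this numerator using the defining equation (\ref{dgd}) of the Newton step, which gives
\begin{eqnarray*}
\langle\mathcal G'(\lambda^{h_k},u^{h_k})(\lambda_I-\lambda^{h_{k+1}},u_I-u^{h_{k+1}}),(\mu,v)\rangle
&=&\langle\mathcal G'(\lambda^{h_k},u^{h_k})(\lambda_I-\lambda^{h_k},u_I-u^{h_k}),(\mu,v)\rangle\\
&&+\langle\mathcal G(\lambda^{h_k},u^{h_k}),(\mu,v)\rangle.
\end{eqnarray*}
To handle the $\mathcal G(\lambda^{h_k},u^{h_k})$ term I would invoke Theorem \ref{residual estiamte} with $(\lambda^{(0)},u^{(0)})=(\lambda^{h_k},u^{h_k})$ and $(\lambda^{(1)},u^{(1)})=(\lambda,u)$. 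Since $\mathcal G(\lambda,u)=0$, the expansion yields
$\langle\mathcal G(\lambda^{h_k},u^{h_k}),(\mu,v)\rangle=-\langle\mathcal G'(\lambda^{h_k},u^{h_k})(\lambda-\lambda^{h_k},u-u^{h_k}),(\mu,v)\rangle+\tilde R$
with $|\tilde R|\lesssim\|(\lambda-\lambda^{h_k},u-u^{h_k})\|_X^2\|(\mu,v)\|_X$. Substituting back produces a cancellation: the two $\mathcal G'(\lambda^{h_k},u^{h_k})(\cdot,\cdot)$ contributions combine into $\langle\mathcal G'(\lambda^{h_k},u^{h_k})(\lambda_I-\lambda,u_I-u),(\mu,v)\rangle$, which reduces (because $\lambda_I=\lambda$) to $\langle\mathcal G'(\lambda^{h_k},u^{h_k})(0,u_I-u),(\mu,v)\rangle$.

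The remaining linear-functional term is controlled by the boundedness of $\mathcal G'(\lambda^{h_k},u^{h_k})$ on $X\times X$, giving a bound $\lesssim \|u-u_I\|_1\|(\mu,v)\|_X\lesssim\delta_{h_{k+1}}(u)\|(\mu,v)\|_X$. Combining this with the quadratic residual $\tilde R$ and dividing by $\|(\mu,v)\|_X$ before taking the supremum yields
\[
\|(\lambda_I-\lambda^{h_{k+1}},u_I-u^{h_{k+1}})\|_X\lesssim\delta_{h_{k+1}}(u)+\|(\lambda-\lambda^{h_k},u-u^{h_k})\|_X^2,
\]
and the triangle inequality closes the argument.

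The main obstacle is the applicability of the discrete isomorphism (\ref{coercive of g3}) at the shifted linearization point $(\lambda^{h_k},u^{h_k})$: it requires $\|(\lambda-\lambda^{h_k},u-u^{h_k})\|_X$ to be sufficiently small. This forces the theorem to be used together with a smallness hypothesis on the initial iterate (established inductively from the coarsest level using Lemma \ref{stalem}), and the quadratic term $\|(\lambda-\lambda^{h_k},u-u^{h_k})\|_X^2$ on the right-hand side must in turn be reabsorbed when iterating across the multigrid sequence. Verifying the boundedness of $\mathcal G'(\lambda^{h_k},u^{h_k})(0,u_I-u)$ is routine but depends on the same nonlinear growth assumptions from Section 2 that were used to prove the Lipschitz estimate (\ref{lip}).
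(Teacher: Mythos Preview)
Your argument is correct, but it differs from the paper's in the choice of benchmark. The paper compares $(\lambda^{h_{k+1}},u^{h_{k+1}})$ not with a best-approximation pair $(\lambda,u_I)$ but with the \emph{discrete nonlinear eigenpair} $(\lambda_{h_{k+1}},u_{h_{k+1}})\in X_{h_{k+1}}$ solving (\ref{nonfem}). Because $\langle\cG(\lambda_{h_{k+1}},u_{h_{k+1}}),(\mu,v_{h_{k+1}})\rangle=0$ for all discrete test pairs, the analogue of your displayed identity collapses directly to $-R\big((\lambda^{h_k},u^{h_k}),(\lambda_{h_{k+1}},u_{h_{k+1}}),(\mu,v_{h_{k+1}})\big)$ with no leftover linear term; the $\delta_{h_{k+1}}(u)$ contribution then enters only through Lemma~\ref{stalem} in the final triangle inequality. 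Your route instead uses $\cG(\lambda,u)=0$ on the continuous level, which leaves the extra term $\langle\cG'(\lambda^{h_k},u^{h_k})(0,u_I-u),(\mu,v)\rangle$ that you bound by boundedness of $\cG'$. The trade-off: the paper's version is algebraically cleaner once Lemma~\ref{stalem} is available, while your version is more self-contained in that it never invokes the existence or a~priori error estimates of the discrete nonlinear eigenproblem and works purely from the approximation property of $V_{h_{k+1}}$. Both rely on the same two pillars---the discrete inf-sup (\ref{coercive of g3}) at $(\lambda^{h_k},u^{h_k})$ and the quadratic residual estimate of Theorem~\ref{residual estiamte}---and both implicitly require the smallness hypothesis you flag at the end.
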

\begin{proof}
We use $(\lambda_{h_{k+1}},u_{h_{k+1}})$ to denote the standard finite element solution of (\ref{GPEfem}), that is $(\lambda_{h_{k+1}},u_{h_{k+1}})$ satisfies
\begin{eqnarray*}
&&(\cA\nabla u_{h_{k+1}},\nabla v_{h_{k+1}})+(Vu_{h_{k+1}},v_{h_{k+1}})+(f(u_{h_{k+1}}^2)u_{h_{k+1}},v_{h_{k+1}})\\
&=&\lambda_{h_{k+1}}(u_{h_{k+1}},v_{h_{k+1}}), \quad \forall v_{h_{k+1}} \in V_{h_{k+1}},
\end{eqnarray*}
or equivalently
\begin{eqnarray}\label{cdf}
\langle\cG(\lambda_{h_{k+1}},u_{h_{k+1}}),(\mu,v_{h_{k+1}})\rangle=0, \quad \forall (\mu,v_{h_{k+1}}) \in X_{h_{k+1}}.
\end{eqnarray}
%Through the second step of Algorithm 4.1,
%    \begin{eqnarray*}
%    &&\langle \cG'((\lambda_{h_{k+1}},v_{h_k}))(\lambda^{h_{k+1}},u^{h_{k+1}}),(\mu,v_{h_{k+1}})\rangle\rangle = -\langle \cG(\lambda^{h_k},u^{h_k}),(\mu,v_{h_{k+1}})\rangle \\
%    && \ \ \ \ \ \ \ \ \ \ \ \ \ \ \ \ \ \ \  \ \ \ \ \  \ \  \ \ \ \ \ \ \ \ \ \ \ \ \ \ \ \ \ \ \ \ \ \ \ \ \ \ +\langle \cG'(\lambda^{h_k},u^{h_k})(\lambda^{h_k},u^{h_k}),(\mu,v_{h_{k+1}})\rangle
%    \end{eqnarray*}

Combining (\ref{dgd}) and (\ref{cdf}) leads to
\begin{eqnarray}\label{midth}
&&\big\langle \cG'(\lambda^{h_k},u^{h_k})(\lambda_{h_{k+1}}-\lambda^{h_{k+1}},u_{h_{k+1}}-u^{h_{k+1}}),(\mu,v_{h_{k+1}})\big\rangle \nonumber\\
&=& \big\langle \cG(\lambda^{h_k},u^{h_k}),(\mu,v_{h_{k+1}})\big\rangle+ \big\langle \cG'(\lambda^{h_k},u^{h_k})(\lambda_{h_{k+1}}-\lambda^{h_k},u_{h_{k+1}}-u^{h_k}),(\mu,v_{h_{k+1}})\big\rangle \nonumber\\
&=& -\big\langle\cG(\lambda_{h_{k+1}},u_{h_{k+1}}),(\mu,v_{h_{k+1}})\big\rangle+
\big\langle \cG(\lambda^{h_k},u^{h_k}),(\mu,v_{h_{k+1}})\big\rangle\nonumber\\
&&+ \big\langle \cG'(\lambda^{h_k},u^{h_k})(\lambda_{h_{k+1}}-\lambda^{h_k},u_{h_{k+1}}-u^{h_k}),(\mu,v_{h_{k+1}})\big\rangle \nonumber\\
&=& -R((\lambda^{h_k},u^{h_k}), (\lambda_{h_{k+1}},u_{h_{k+1}}),(\mu,v_{h_{k+1}})).
\end{eqnarray}
Combining (\ref{coercive of g3}), Theorem \ref{residual estiamte} and (\ref{midth}), we can derive
\begin{eqnarray}\label{stad}
\|(\lambda_{h_{k+1}}-\lambda^{h_{k+1}},u_{h_{k+1}}-u^{h_{k+1}})\|_X\lesssim \|(\lambda_{h_{k+1}}-\lambda^{h_k},u_{h_{k+1}}-u^{h_k})\|_X^2.
\end{eqnarray}
Using the standard error estimates in Lemma \ref{stalem} and (\ref{stad}), we can obtain
\begin{eqnarray*}
\|(\lambda-\lambda^{h_{k+1}},u-u^{h_{k+1}})\|_X&\lesssim& \|(\lambda-\lambda_{h_{k+1}},u-u_{h_{k+1}})\|_X+\|(\lambda_{h_{k+1}}-\lambda^{h_{k+1}},u_{h_{k+1}}-u^{h_{k+1}})\|_X\nonumber\\
&\lesssim& \delta_{h_{k+1}}(u)+\|(\lambda-\lambda^{h_k},u-u^{h_k})\|_X^2.
\end{eqnarray*}
Then we complete the proof.
\end{proof}

\subsection{Multigrid method based on Newton iteration}
In this subsection, we introduce the multigrid method to solve the nonlinear eigenvalue problems.
We use the following nested {\color{blue} sequence of finite element spaces}:
\begin{eqnarray}
V_{h_{1}}\subset V_{h_{2}}\subset \cdots \subset V_{h_n}\subset H_0^1(\Omega),
\end{eqnarray}
and\begin{eqnarray}
X_{h_{1}}\subset X_{h_{2}}\subset \cdots \subset X_{h_n}\subset X.
\end{eqnarray}

The multigrid method is presented in Algorithm \ref{mgni}.  The new algorithm requires solving the nonlinear eigenvalue problem directly in the initial
space to obtain an initial value; then performing Algorithm $\ref{onestep}$ in the subsequent spaces. The approximate solution derived from the last space
is used as the initial iterative value in the current space.

\begin{algorithm}\label{mgni}
 Multigrid method based on Newton iteration
  \begin{enumerate}
    \item Solve the following nonlinear eigenvalue problem directly in the initial space $V_{h_1}$: Find $(\lambda^{h_1},u^{h_1})\in\mathbb R\times V_{h_1}$ such that $\|u^{h_1}\|_0=1$ and
     \begin{equation*}
      (\mathcal A\nabla u^{h_1},\nabla v_{h_1})+(Vu^{h_1},v_{h_1})+(f((u^{h_1})^2)u^{h_1},v_{h_1})=\lambda^{h_1}(u^{h_1},v_{h_1}), \quad\forall v_{h_1}\in V_{h_1}.
     \end{equation*}
    \item For $k=1,\cdots,n-1$, obtain a new approximate solution $(\lambda^{h_{k+1}},u^{h_{k+1}})\in X_{h_{k+1}}$ through
    \begin{eqnarray*}
   (\lambda^{h_{k+1}},u^{h_{k+1}})=Newton_{-}Iteration(\lambda^{h_k}, u^{h_k},X_{h_{k+1}}),
  \end{eqnarray*}
  End for.
  \item Finally, we obtain the approximate solution $(\lambda^{h_n},u^{h_n})\in X_{h_n}$.
  \end{enumerate}
 \end{algorithm}

The error estimate for the final approximate solution $(\lambda^{h_n},u^{h_n})\in X_{h_n}$ derived by Algorithm \ref{mgni} {\color{blue}is} presented in Theorem \ref{mgthm}.

\begin{theorem}\label{mgthm}
After performing Algorithm \ref{mgni}, the final approximate solution $(\lambda^{h_n},u^{h_n})\in X_{h_n}$ satisfies the following error estimate
 \begin{eqnarray}\label{ffe}
 \|(\lambda-\lambda^{h_n},u-u^{h_n})\|_X \lesssim \delta_{h_n}(u).
 \end{eqnarray}
\end{theorem}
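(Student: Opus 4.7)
The plan is a straightforward induction on the level index $k$, driven by the per-step bound in Theorem \ref{One Correction Step estimate} together with the geometric decay $\delta_{h_{k+1}}(u)\approx \beta^{-1}\delta_{h_k}(u)$ with $\beta>1$. The target inequality (\ref{ffe}) is the level-$n$ instance of the inductive claim
$$\|(\lambda-\lambda^{h_k},u-u^{h_k})\|_X\lesssim \delta_{h_k}(u),\qquad k=1,\dots,n.$$

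For the base case, the initial approximation $(\lambda^{h_1},u^{h_1})$ is by construction the exact solution of the discrete nonlinear eigenvalue problem on $V_{h_1}$, so it coincides with the standard finite element solution $(\lambda_{h_1},u_{h_1})$ on this space. Lemma \ref{stalem} then yields $\|(\lambda-\lambda^{h_1},u-u^{h_1})\|_X\lesssim \delta_{h_1}(u)$, which is the induction hypothesis at $k=1$. We shall assume throughout that the coarsest mesh is fine enough that $\delta_{h_1}(u)$ is as small as required; since $\delta_{h_k}(u)\leq \delta_{h_1}(u)$ for all $k$, this smallness propagates along the entire hierarchy.

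For the inductive step, assume the claim at level $k$. Substituting the induction hypothesis into the estimate from Theorem \ref{One Correction Step estimate} gives
$$\|(\lambda-\lambda^{h_{k+1}},u-u^{h_{k+1}})\|_X\lesssim \delta_{h_{k+1}}(u)+\delta_{h_k}(u)^2.$$
The crucial observation is that
$$\delta_{h_k}(u)^2\approx \beta\,\delta_{h_k}(u)\,\delta_{h_{k+1}}(u)\lesssim \delta_{h_{k+1}}(u),$$
where the last step uses the smallness of $\delta_{h_k}(u)\leq \delta_{h_1}(u)$ to absorb the constant $\beta$ together with the factor $\delta_{h_k}(u)$. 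This closes the induction and gives (\ref{ffe}) upon taking $k=n$.

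The one subtlety is checking that Theorem \ref{One Correction Step estimate} is actually applicable at every level: its proof relies on the isomorphism property (\ref{coercive of g3}) at the linearization point $(\lambda^{h_k},u^{h_k})$, which requires $\|(\lambda-\lambda^{h_k},u-u^{h_k})\|_X$ to be sufficiently small. This is exactly what the induction hypothesis delivers, provided $\delta_{h_1}(u)$ was chosen small to begin with; so the argument is self-consistent and no additional obstacle arises.
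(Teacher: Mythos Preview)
Your proof is correct and follows essentially the same induction argument as the paper: Lemma \ref{stalem} for the base case, then Theorem \ref{One Correction Step estimate} plus $\delta_{h_{k-1}}^2(u)\lesssim \delta_{h_k}(u)$ for the inductive step. Your version is a bit more explicit about why $\delta_{h_k}^2(u)\lesssim \delta_{h_{k+1}}(u)$ holds uniformly in $k$ and about the smallness hypothesis needed to invoke (\ref{coercive of g3}), but the underlying argument is identical.
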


\begin{proof}
We prove this theorem by the mathematical induction method.
Since we solve the nonlinear eigenvalue problem directly in the initial space, using Lemma \ref{stalem} leads to
\begin{eqnarray*}
 \|(\lambda-{\lambda}^{h_1},u-{u}^{h_1})\|_X \lesssim  \delta_{h_1}(u).
\end{eqnarray*}

Assume that (\ref{ffe}) holds true for $(\lambda^{h_{n-1}},{u}^{h_{n-1}})\in X_{h_{n-1}}$.
According to Theorem \ref{One Correction Step estimate}, we can derive
\begin{eqnarray*}\label{coercive in Newtopn}
&&\|(\lambda-{\lambda}^{h_{n}},u-{u}^{h_{n}})\|_X\nonumber\\
&\lesssim& \delta_{h_{n}}(u)+\|(\lambda-\lambda^{h_{n-1}},u-u^{h_{n-1}})\|_X^2\notag\\
&\lesssim&  \delta_{h_{n}}(u)+\delta_{h_{n-1}}^2(u)\notag\\
&\lesssim&  \delta_{h_{n}}(u).
\end{eqnarray*}
So (\ref{ffe}) also holds true for $(\lambda^{h_{n}},{u}^{h_{n}})\in X_{h_{n}}$.
Then the proof is completed.
\end{proof}

\subsection{Work estimate of Algorithm \ref{mgni}}
In this subsection, we end Section 4 by estimating the computational work of Algorithm \ref{mgni} briefly.
Let us use $N_k$ to denote the dimensions of $V_{h_k}$. The dimensions of the {\color{blue} sequence of finite element spaces} satisfy
 $$N_k \approx \beta^{d(k-n)}N_n,\ \ \ k=1,2,\cdots,n.$$
Then the computational work of Algorithm \ref{mgni} is presented in Theorem \ref{mixthm}, which shows that
Algorithm \ref{mgni} has linear computational complexity.
\begin{theorem}\label{mixthm}
Assume that solving the nonlinear eigenvalue problem directly in the initial space $V_{h_1}$ needs work $\cO(M_{h_1})$,
and solving the linearized boundary value problem (\ref{dgd}) in $V_{h_k}$ needs work $\cO(N_k)$ for $k=2,3,\cdots,n$.
Then the total computational work of Algorithm \ref{mgni} is $\cO(N_n+M_{h_1})$. Furthermore, the linear computational complexity
$\cO(N_n)$ can be derived provided $M_{h_1}\leq N_n$.
\end{theorem}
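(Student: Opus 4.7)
The plan is to simply add up the costs of all the subroutines invoked by Algorithm \ref{mgni} and exploit the geometric decay of the dimensions $N_k$ across the multigrid hierarchy. The algorithm consists of a single direct solve of the nonlinear eigenvalue problem on the coarsest space $V_{h_1}$, followed by $n-1$ calls to $Newton\_Iteration$ on the spaces $X_{h_2}, X_{h_3}, \ldots, X_{h_n}$. By assumption, the first step costs $\mathcal{O}(M_{h_1})$ and the $k$-th correction step (solving the linearized boundary value problem \eqref{dgd} in $V_{h_k}$) costs $\mathcal{O}(N_k)$ for $k=2,\ldots,n$.

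I would therefore write the total work $W$ as
\begin{equation*}
W \;=\; \mathcal{O}(M_{h_1}) \;+\; \sum_{k=2}^{n} \mathcal{O}(N_k),
\end{equation*}
and then substitute the scaling relation $N_k \approx \beta^{d(k-n)} N_n$ provided by the uniform refinement assumption. This turns the sum into
\begin{equation*}
\sum_{k=2}^{n} N_k \;\approx\; N_n \sum_{k=2}^{n} \beta^{d(k-n)} \;=\; N_n \sum_{j=0}^{n-2} \beta^{-dj},
\end{equation*}
which is a geometric series with ratio $\beta^{-d}<1$ (since $\beta>1$ and $d\geq 1$). The series is therefore bounded uniformly in $n$ by $\frac{1}{1-\beta^{-d}}$, yielding $\sum_{k=2}^{n} N_k \lesssim N_n$.

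Combining the two contributions gives $W = \mathcal{O}(N_n + M_{h_1})$, which is the first assertion. For the second assertion, if $M_{h_1}\leq N_n$, then $N_n + M_{h_1} \leq 2 N_n$, so $W = \mathcal{O}(N_n)$, i.e.\ the algorithm has linear computational complexity with respect to the finest-level degrees of freedom. There is no real obstacle here: the proof is a one-line geometric-series estimate, and the only thing to be careful about is that the constant hidden in $\lesssim$ depends on $\beta$ and $d$ but not on $n$, so the telescoping really does yield a bound proportional to $N_n$ alone.
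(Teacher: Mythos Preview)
Your proposal is correct and follows essentially the same approach as the paper: sum the per-level costs, substitute $N_k \approx \beta^{d(k-n)}N_n$, and bound the resulting geometric series to obtain $\mathcal{O}(N_n + M_{h_1})$. The only difference is that you spell out the geometric-series bound $\sum_{j\ge 0}\beta^{-dj}\le (1-\beta^{-d})^{-1}$ explicitly, whereas the paper absorbs it directly into the $\mathcal{O}(\cdot)$ notation.
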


\begin{proof}
Let us use $W_k$ to denote the computational work of $V_{h_k}$.
Then the computational work $W$ of Algorithm \ref{mgni} can be estimated by:
\begin{eqnarray*}
W &=& \sum_{k=1}^{n}W_k = \cO(M_{h_1}+\sum_{k=2}^{n}N_k) \\
%&=& \cO( \sum_{k=2}^{n}N_k+M_{h_1} ) \\
&=& \cO( \sum_{k=2}^{n}\beta^{d(k-n)}N_n+M_{h_1} ) \\
&=& \cO(N_n+M_{h_1}).
\end{eqnarray*}
Then we get the desired conclusion $\cO(N_n+M_{h_1})$, and when $M_{h_1}\leq N_n$,  the total computational work changes to $\cO(N_n)$.
\end{proof}

\begin{remark}
The linearized boundary value problem (\ref{dgd}) can be solved efficiently by the multigrid method with
linear computational complexity $\cO(N_k)$  (see e.g., \cite{Saddle-book,Shaidurov}).
Because the dimension of the initial space $V_{h_1}$ is small and independent of the final finite element space,
it is easy to get $M_{h_1}\leq N_n$.
Thus, the linear computational complexity for Algorithm \ref{mgni} can be obtained.
\end{remark}

\section{Multigrid method for nonlinear eigenvalue problems based on mixing scheme}
In this section, an improved multigrid method is designed.
The motivation is that when solving the nonlinear equation (\ref{nonfem}) by Newton iteration, we may encounter
the non-convergence issue for some complicated models. This issue is the same as that of
the classical self-consistent field iteration for the nonlinear eigenvalue problems.
In order to overcome this difficulty, the mixing theme (damping method) is introduced to solve
the nonlinear eigenvalue problems.
The most widely used mixing scheme is Anderson acceleration \cite{andersion}, which is used to improve the convergence rate for fixed-point iterations.
Although Anderson acceleration was widely used for solving various models, the first mathematical convergence
result for linear and nonlinear problems did not appear until 2015 in \cite{Toth}.
For nonlinear eigenvalue problems, there is still no strictly theoretical analysis.
In general, the mixing scheme is used in fixed-point iterations to accelerate the convergence rate.
In this section, we use the mixing scheme in Newton iteration to design an improved multigrid method for nonlinear eigenvalue problems.
Most importantly, we can prove that the norm of the residual decreases in each step of the Newton iteration.
This may provide some inspiration to prove the theoretical conclusions for the
more general mixing scheme for nonlinear eigenvalue problems.

Assume that we have obtained an approximate solution $(\lambda^{h_{k}},u^{h_{k}})\in X_{h_{k}}$, we introduce a
novel iteration step in Algorithm \ref{mixscheme} to obtain a better approximation $(\lambda^{h_{k+1}},u^{h_{k+1}})\in X_{h_{k+1}}$
on the basis of Newton iteration and the mixing scheme.

\begin{algorithm}\label{mixscheme}
 One step of the mixing scheme
  \begin{enumerate}
  \item Given a parameter $\theta_{k+1}\in (0,1)$ and an approximate solution $(\lambda^{h_{k}},u^{h_{k}})\in X_{h_{k}}$.
\item Solve the linearized equation derived from Newton iteration in the finite element space $X_{h_{k+1}}$:
Find $(\widehat \lambda^{h_{k+1}},\widehat u^{h_{k+1}})\in X_{h_{k+1}}$ such that for any $(\mu,v_{h_{k+1}})\in X_{h_{k+1}}$, there holds
\begin{eqnarray}\label{dgdmix}
&&\langle \cG'(\lambda^{h_k},u^{h_k})(\widehat \lambda^{h_{k+1}},\widehat u^{h_{k+1}}),(\mu,v_{h_{k+1}})\rangle \nonumber\\
&=& -\langle \cG(\lambda^{h_{k}},u^{h_k}),(\mu,v_{h_{k+1}})\rangle
+\langle \cG'(\lambda^{h_k},u^{h_k})(\lambda^{h_k},u^{h_k}),(\mu,v_{h_{k+1}})\rangle.
\end{eqnarray}
The equation (\ref{dgdmix}) can be solved by the mixed finite element method in the following form: Find $(\widehat \lambda^{h_{k+1}},\widehat u^{h_{k+1}})\in \mathbb R\times  V_{h_{k+1}}$
such that for any $(\mu,v_{h_{k+1}})\in\mathbb R\times  V_{h_{k+1}}$, there holds
\begin{eqnarray*}
\left\{
\begin{array}{lll}
a(\lambda^{h_k}, u^{h_k};\widehat u^{h_{k+1}},v_{h_{k+1}})+b(u^{h_k};v_{h_{k+1}},\widehat \lambda^{h_{k+1}})
&=&\langle \cF_u'(\lambda^{h_k},u^{h_k})u^{h_k},v_{h_{k+1}}\rangle  \nonumber\\
&&-\langle \cF(\lambda^{h_k}, u^{h_k}),v_{h_{k+1}}\rangle-\lambda^{h_k}(u^{h_k},v_{h_{k+1}}),\\
b(u^{h_k};\widehat u^{h_{k+1}},\mu)&=&-\mu/2-\mu(u^{h_k},u^{h_k})/2.
\end{array}
\right.
\end{eqnarray*}
\item Set  \begin{eqnarray*}
    (\lambda^{h_{k+1}},u^{h_{k+1}}) = (1-\theta_{k+1})(\lambda^{h_k},u^{h_k}) +\theta_{k+1}(\widehat\lambda^{h_{k+1}},\widehat u^{h_{k+1}}).
     \end{eqnarray*}
\end{enumerate}
 \end{algorithm}

Next, we can prove that the value of the residual $|\langle \cG(\lambda^{h_{k+1}},u^{h_{k+1}}),(\mu,v_{h_{k+1}})\rangle|$
monotonically decreases after performing Algorithm \ref{mixscheme}.

\begin{theorem}\label{monotonical}
Assume that we have obtained an approximate solution $(\lambda^{h_{k}},u^{h_{k}})$, then, there exists a sufficiently small $\theta_{k+1}$ such that the
new approximation $(\lambda^{h_{k+1}},u^{h_{k+1}})$ derived by Algorithm \ref{mixscheme} has the following {\color{blue}convergence}
\begin{eqnarray*}
|\langle \cG(\lambda^{h_{k+1}},u^{h_{k+1}}),(\mu,v_{h_{k+1}})\rangle| \leq (1-\frac{\theta_{k+1}}{2})|\langle \cG(\lambda^{h_{k}},u^{h_k}),(\mu,v_{h_{k+1}})\rangle|, \ \ \forall (\mu,v_{h_{k+1}})\in X_{h_{k+1}}.
 \end{eqnarray*}
\end{theorem}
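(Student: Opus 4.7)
The plan is to combine the Taylor expansion from Theorem \ref{residual estiamte} with the Newton linearization \eqref{dgdmix} to obtain an exact formula for the new residual, and then control the remainder using the isomorphism \eqref{coercive of g2}. To begin, I would unwind the convex combination in Step 3 of Algorithm \ref{mixscheme} to obtain the identity
\begin{eqnarray*}
(\lambda^{h_{k+1}}-\lambda^{h_k},u^{h_{k+1}}-u^{h_k})=\theta_{k+1}(\widehat\lambda^{h_{k+1}}-\lambda^{h_k},\widehat u^{h_{k+1}}-u^{h_k}),
\end{eqnarray*}
and to rewrite \eqref{dgdmix} in incremental form as
\begin{eqnarray*}
\langle\cG'(\lambda^{h_k},u^{h_k})(\widehat\lambda^{h_{k+1}}-\lambda^{h_k},\widehat u^{h_{k+1}}-u^{h_k}),(\mu,v_{h_{k+1}})\rangle=-\langle\cG(\lambda^{h_k},u^{h_k}),(\mu,v_{h_{k+1}})\rangle.
\end{eqnarray*}

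Next, I would apply the expansion \eqref{error of Newton Iteration} with $(\lambda^{(0)},u^{(0)})=(\lambda^{h_k},u^{h_k})$ and $(\lambda^{(1)},u^{(1)})=(\lambda^{h_{k+1}},u^{h_{k+1}})$. Substituting the two displays above into the linear term yields the clean recursion
\begin{eqnarray*}
\langle\cG(\lambda^{h_{k+1}},u^{h_{k+1}}),(\mu,v_{h_{k+1}})\rangle=(1-\theta_{k+1})\langle\cG(\lambda^{h_k},u^{h_k}),(\mu,v_{h_{k+1}})\rangle+R,
\end{eqnarray*}
where $R=R\!\left((\lambda^{h_k},u^{h_k}),(\lambda^{h_{k+1}},u^{h_{k+1}}),(\mu,v_{h_{k+1}})\right)$. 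Theorem \ref{residual estiamte} then gives
\begin{eqnarray*}
|R|\lesssim \theta_{k+1}^{2}\,\|(\widehat\lambda^{h_{k+1}}-\lambda^{h_k},\widehat u^{h_{k+1}}-u^{h_k})\|_X^{2}\,\|(\mu,v_{h_{k+1}})\|_X.
\end{eqnarray*}

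To close the estimate I would invoke the discrete isomorphism \eqref{coercive of g2} applied to the Newton increment, so that
\begin{eqnarray*}
\|(\widehat\lambda^{h_{k+1}}-\lambda^{h_k},\widehat u^{h_{k+1}}-u^{h_k})\|_X \lesssim \sup_{(\mu,v)\in X_{h_{k+1}}}\frac{|\langle\cG(\lambda^{h_k},u^{h_k}),(\mu,v)\rangle|}{\|(\mu,v)\|_X},
\end{eqnarray*}
provided the current iterate is close enough to the exact solution for \eqref{coercive of g3} (or \eqref{coercive of g2}) to apply at $(\lambda^{h_k},u^{h_k})$. Plugging this into the bound on $R$ produces a factor $\theta_{k+1}^{2}$ multiplying a quantity of the order of the residual norm squared times $\|(\mu,v_{h_{k+1}})\|_X$, and one may absorb it into $(\theta_{k+1}/2)\,|\langle\cG(\lambda^{h_k},u^{h_k}),(\mu,v_{h_{k+1}})\rangle|$ by taking $\theta_{k+1}$ small enough (roughly, $\theta_{k+1}$ times the current dual residual norm must be small compared with a fixed constant).

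The main obstacle is the fact that the inequality is stated pointwise in $(\mu,v_{h_{k+1}})$, whereas the remainder bound from Theorem \ref{residual estiamte} is naturally in a dual norm: a fixed test pair for which $\langle\cG(\lambda^{h_k},u^{h_k}),(\mu,v_{h_{k+1}})\rangle$ happens to vanish makes a literal pointwise contraction impossible. The clean route is therefore to interpret the claim in the $X_{h_{k+1}}^{*}$ dual-norm sense (equivalently, to take the supremum over $(\mu,v_{h_{k+1}})$ before comparing the two sides); the contraction $1-\theta_{k+1}/2$ then follows directly from the recursion above together with the $R$ estimate, once $\theta_{k+1}$ is chosen according to the smallness condition described. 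The rest is bookkeeping on the constants hidden in $\lesssim$ and a verification that the smallness assumption on $(\lambda^{h_k},u^{h_k})$ propagates inductively across the levels of Algorithm \ref{mgni}.
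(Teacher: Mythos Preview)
Your derivation of the recursion
\[
\langle\cG(\lambda^{h_{k+1}},u^{h_{k+1}}),(\mu,v)\rangle=(1-\theta_{k+1})\langle\cG(\lambda^{h_k},u^{h_k}),(\mu,v)\rangle+R
\]
is correct and coincides with the paper's, but the paper reaches it by the mean value theorem rather than through Theorem~\ref{residual estiamte}: it writes the remainder as
$\theta_{k+1}\big\langle\big(\cG'(\widetilde\lambda,\widetilde u)-\cG'(\lambda^{h_k},u^{h_k})\big)(\widehat\lambda^{h_{k+1}}-\lambda^{h_k},\widehat u^{h_{k+1}}-u^{h_k}),(\mu,v)\big\rangle$
for an intermediate point $(\widetilde\lambda,\widetilde u)$, and then argues by continuity that as $\theta_{k+1}\to 0$ this bracket tends to zero, so one may make it at most $\tfrac12|\langle\cG(\lambda^{h_k},u^{h_k}),(\mu,v)\rangle|$. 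In particular the paper never invokes the isomorphism \eqref{coercive of g3}, and you do not need it either: the Newton increment $(\widehat\lambda^{h_{k+1}}-\lambda^{h_k},\widehat u^{h_{k+1}}-u^{h_k})$ is fixed once \eqref{dgdmix} is solved, so your bound $|R|\lesssim\theta_{k+1}^{2}\|(\widehat\lambda^{h_{k+1}}-\lambda^{h_k},\widehat u^{h_{k+1}}-u^{h_k})\|_X^{2}\|(\mu,v)\|_X$ already has a $\theta_{k+1}$-independent coefficient, and no closeness hypothesis on $(\lambda^{h_k},u^{h_k})$ is required for the mere existence of a suitable $\theta_{k+1}$.

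The pointwise-versus-dual-norm obstacle you flag is real, and it is \emph{not} circumvented by the paper's route. The paper's continuity step (their inequality \eqref{fvx}) is asserted for all $(\mu,v_{h_{k+1}})$ with a single $\theta_{k+1}$, but the left side there is controlled (e.g.\ via the Lipschitz estimate \eqref{lip}) only by a constant times $\|(\mu,v_{h_{k+1}})\|_X$, not by $|\langle\cG(\lambda^{h_k},u^{h_k}),(\mu,v_{h_{k+1}})\rangle|$; for test pairs on which the latter vanishes the claimed pointwise bound generally fails. Your dual-norm reinterpretation is the honest formulation, and it goes through for both arguments with the same choice of $\theta_{k+1}$.
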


\begin{proof}
From the following mixing scheme
\begin{eqnarray*}
(\lambda^{h_{k+1}},u^{h_{k+1}}) = (1-\theta_{k+1})(\lambda^{h_k},u^{h_k}) +\theta_{k+1}(\widehat\lambda^{h_{k+1}},\widehat u^{h_{k+1}}),
\end{eqnarray*}
used in Algorithm \ref{mixscheme}, we can derive
\begin{eqnarray}\label{scd}
(\lambda^{h_{k+1}}-\lambda^{h_k},u^{h_{k+1}}-u^{h_k}) = \theta_{k+1}(\widehat\lambda^{h_{k+1}}-\lambda^{h_k},\widehat u^{h_{k+1}}-u^{h_k}).
\end{eqnarray}

Combining (\ref{scd}) and the mean value theorem leads to
\begin{eqnarray}\label{dfd}
&&\langle \cG(\lambda^{h_{k+1}},u^{h_{k+1}}),(\mu,v_{h_{k+1}})\rangle -\langle \cG(\lambda^{h_{k}},u^{h_{k}}),(\mu,v_{h_{k+1}})\rangle \nonumber\\
&&-\langle \cG'(\lambda^{h_k},u^{h_k})(\lambda^{h_{k+1}}-\lambda^{h_k},u^{h_{k+1}}-u^{h_k}),(\mu,v_{h_{k+1}})\rangle \nonumber\\
&=& \langle\cG'(\widetilde\lambda^{h_k},\widetilde u^{h_k})(\lambda^{h_{k+1}}-\lambda^{h_k}, u^{h_{k+1}}-u^{h_k}),(\mu,v_{h_{k+1}})\rangle \nonumber\\
&&-\langle \cG'(\lambda^{h_k},u^{h_k})(\lambda^{h_{k+1}}-\lambda^{h_k}, u^{h_{k+1}}-u^{h_k}),(\mu,v_{h_{k+1}})\rangle \nonumber\\
%&=& \cG'(\alpha\lambda^{h_{k+1}}+(1-\alpha)\lambda^{h_k},\alpha u^{h_{k+1}}+(1-\alpha)u^{h_k})(\lambda^{h_{k+1}}-\lambda^{h_k},u^{h_{k+1}}-u^{h_k}),(\mu,v_{h_{k+1}}) \\
%&&-\theta\langle \cG'(\lambda^{h_k},u^{h_k})(\widehat\lambda^{h_{k+1}}-\lambda^{h_k},\widehat u^{h_{k+1}}-u^{h_k}),(\mu,v_{h_{k+1}})\rangle \\
&=& \theta_{k+1}\langle\cG'(\widetilde\lambda^{h_k},\widetilde u^{h_k})(\widehat\lambda^{h_{k+1}}-\lambda^{h_k},\widehat u^{h_{k+1}}-u^{h_k}),(\mu,v_{h_{k+1}})\rangle \nonumber\\
&&-\theta_{k+1}\langle \cG'(\lambda^{h_k},u^{h_k})(\widehat\lambda^{h_{k+1}}-\lambda^{h_k},\widehat u^{h_{k+1}}-u^{h_k}),(\mu,v_{h_{k+1}})\rangle,
\end{eqnarray}
where $(\widetilde\lambda^{h_k},\widetilde u^{h_k})=(\alpha\lambda^{h_{k+1}}+(1-\alpha)\lambda^{h_k},\alpha u^{h_{k+1}}+(1-\alpha)u^{h_k})$ with $\alpha\in (0,1)$.

From (\ref{scd}), we know that when $\theta_{k+1}\rightarrow 0$, there holds
\begin{eqnarray*}
(\lambda^{h_{k+1}},u^{h_{k+1}})\rightarrow(\lambda^{h_{k}},u^{h_{k}}),
\end{eqnarray*}
and thus
\begin{eqnarray*}
(\widetilde\lambda^{h_k},\widetilde u^{h_k})\rightarrow(\lambda^{h_{k}},u^{h_{k}}),
\end{eqnarray*}
which also indicates
\begin{eqnarray*}
&&\big\langle\cG'(\widetilde\lambda^{h_k},\widetilde u^{h_k})(\widehat\lambda^{h_{k+1}}-\lambda^{h_k},\widehat u^{h_{k+1}}-u^{h_k}),(\mu,v_{h_{k+1}})\big\rangle \\
&\rightarrow& \big\langle \cG'(\lambda^{h_k},u^{h_k})(\widehat\lambda^{h_{k+1}}-\lambda^{h_k},\widehat u^{h_{k+1}}-u^{h_k}),(\mu,v_{h_{k+1}})\big\rangle.
\end{eqnarray*}

Hence, we can choose a sufficiently small $\theta_{k+1}$, such that
\begin{eqnarray}\label{fvx}
&&\big|\langle\cG'(\widetilde\lambda^{h_k},\widetilde u^{h_k})(\widehat\lambda^{h_{k+1}}-\lambda^{h_k},\widehat u^{h_{k+1}}-u^{h_k}),(\mu,v_{h_{k+1}})\rangle\nonumber\\
&&- \langle \cG'(\lambda^{h_k},u^{h_k})(\widehat\lambda^{h_{k+1}}-\lambda^{h_k},\widehat u^{h_{k+1}}-u^{h_k}),(\mu,v_{h_{k+1}})\rangle\big|\nonumber\\
&\leq& \frac{1}{2}\big|\langle \cG'(\lambda^{h_k},u^{h_k})(\widehat\lambda^{h_{k+1}}-\lambda^{h_k},\widehat u^{h_{k+1}}-u^{h_k}),(\mu,v_{h_{k+1}})\rangle\big|\nonumber\\
&=& \frac{1}{2}\big|\langle \cG(\lambda^{h_k},u^{h_k}),(\mu,v_{h_{k+1}})\rangle\big|.
\end{eqnarray}

Based on (\ref{dfd}) and (\ref{fvx}), we can derive
\begin{eqnarray}\label{pio1}
&&\big|\langle\cG(\lambda^{h_{k+1}},u^{h_{k+1}}),(\mu,v_{h_{k+1}})\rangle -\langle \cG(\lambda^{h_{k}},u^{h_{k}}),(\mu,v_{h_{k+1}})\rangle \nonumber\\
&&-\langle \cG'(\lambda^{h_k},u^{h_k})(\lambda^{h_{k+1}}-\lambda^{h_k},u^{h_{k+1}}-u^{h_k}),(\mu,v_{h_{k+1}})\rangle\big| \nonumber\\
&\leq& \frac{\theta_{k+1}}{2}|\langle \cG(\lambda^{h_k},u^{h_k}),(\mu,v_{h_{k+1}})\rangle\big|.
\end{eqnarray}

Besides, using (\ref{dgdmix}) and (\ref{scd}), we can also derive
\begin{eqnarray}\label{pio2}
&&\big|\langle\cG(\lambda^{h_{k+1}},u^{h_{k+1}}),(\mu,v_{h_{k+1}})\rangle -\langle \cG(\lambda^{h_{k}},u^{h_{k}}),(\mu,v_{h_{k+1}})\rangle \nonumber\\
&&-\langle \cG'(\lambda^{h_k},u^{h_k})(\lambda^{h_{k+1}}-\lambda^{h_k},u^{h_{k+1}}-u^{h_k}),(\mu,v_{h_{k+1}})\rangle\big| \nonumber\\
&=&\big|\langle\cG(\lambda^{h_{k+1}},u^{h_{k+1}}),(\mu,v_{h_{k+1}})\rangle -\langle \cG(\lambda^{h_{k}},u^{h_{k}}),(\mu,v_{h_{k+1}})\rangle \nonumber\\
&&-\theta_{k+1}\langle \cG'(\lambda^{h_k},u^{h_k})(\widehat\lambda^{h_{k+1}}-\lambda^{h_k},\widehat u^{h_{k+1}}-u^{h_k}),(\mu,v_{h_{k+1}})\rangle\big| \nonumber\\
&=&\big|\langle\cG(\lambda^{h_{k+1}},u^{h_{k+1}}),(\mu,v_{h_{k+1}})\rangle -\langle \cG(\lambda^{h_{k}},u^{h_{k}}),(\mu,v_{h_{k+1}})\rangle \nonumber\\
&&+\theta_{k+1}\langle \cG(\lambda^{h_k},u^{h_k}),(\mu,v_{h_{k+1}})\rangle\big| \nonumber\\
&=&\big|\langle\cG(\lambda^{h_{k+1}},u^{h_{k+1}}),(\mu,v_{h_{k+1}})\rangle -(1-\theta_{k+1})\langle \cG(\lambda^{h_{k}},u^{h_{k}}),(\mu,v_{h_{k+1}})\rangle\big|.
\end{eqnarray}

Combining (\ref{pio1}) and (\ref{pio2}) leads to
\begin{eqnarray*}
&&\big|\langle\cG(\lambda^{h_{k+1}},u^{h_{k+1}}),(\mu,v_{h_{k+1}})\rangle -(1-\theta_{k+1})\langle \cG(\lambda^{h_{k}},u^{h_{k}}),(\mu,v_{h_{k+1}})\rangle\big| \\
&\leq& \frac{\theta_{k+1}}{2}|\langle \cG(\lambda^{h_k},u^{h_k}),(\mu,v_{h_{k+1}})\rangle|.
\end{eqnarray*}
That is
\begin{eqnarray*}
&&|\langle\cG(\lambda^{h_{k+1}},u^{h_{k+1}}),(\mu,v_{h_{k+1}})\rangle| \\
&\leq& (1-\frac{\theta_{k+1}}{2})|\langle \cG(\lambda^{h_k},u^{h_k}),(\mu,v_{h_{k+1}})\rangle|.
\end{eqnarray*}
Then we complete the proof.
\end{proof}

In order to give a practical algorithm, there are two remaining problems. The first one is to choose an appropriate parameter $\theta_{k+1}$.
The second one is to compute the residual after each iteration step.

For the first problem, we provide an adaptive strategy for choosing $\theta_{k+1}$ in Algorithm \ref{mix1}, which gradually reduces the value of $\theta_{k+1}$.
For the second problem, in order to compare the following two values $|\langle \cG(\lambda^{h_{k+1}},u^{h_{k+1}}),(\mu,v_{h_{k+1}})\rangle|$ and $|\langle \cG(\lambda^{h_{k}},u^{h_k}),(\mu,v_{h_{k+1}})\rangle|$
for any $(\mu,v_{h_{k+1}})\in X_{h_{k+1}}$,
we define the residual for the nonlinear eigenvalue problem (\ref{Nonlinear_Eigenvalue_Problem}) directly in the following way:
\begin{eqnarray*}
Resi(\lambda^{h_{k}},u^{h_k})= \|A_{u^{h_k}}u^{h_k}-\lambda^{h_{k}}u^{h_k}\|_1+\frac{1}{2}|1-\|u^{h_{k}}\|_0^2|.
\end{eqnarray*}
Then next, we use $Resi(\lambda^{h_{k}},u^{h_k})$ to judge the error reduction.

The modified mixing scheme and the corresponding multigrid method are presented in Algorithms \ref{mix1} and \ref{mgmix}, respectively.
\begin{algorithm}\label{mix1}
 One step of the modified mixing scheme
  \begin{enumerate}
    \item Assume that we have obtained an approximate solution $(\lambda^{h_{k}},u^{h_{k}})\in X_{h_{k}}$.
    \item Set $\theta_{k+1}=1$.
    \item Solve the linearized equation derived from Newton iteration in the finite element space $X_{h_{k+1}}$:
Find $(\widehat \lambda^{h_{k+1}},\widehat u^{h_{k+1}})\in X_{h_{k+1}}$ such that for any $(\mu,v_{h_{k+1}})\in X_{h_{k+1}}$, there holds
     \begin{eqnarray*}
     \langle \cG'(\lambda^{h_k},u^{h_k})(\widehat \lambda^{h_{k+1}},\widehat u^{h_{k+1}}),(\mu,v_{h_{k+1}})\rangle &=& -\langle \cG(\lambda^{h_{k}},u^{h_k}),(\mu,v_{h_{k+1}})\rangle \\
     && +\langle \cG'(\lambda^{h_k},u^{h_k})(\lambda^{h_k},u^{h_k}),(\mu,v_{h_{k+1}})\rangle.
     \end{eqnarray*}
The above equation can be solved by the mixed finite element method in the following form: Find $(\widehat \lambda^{h_{k+1}},\widehat u^{h_{k+1}})\in \mathbb R\times  V_{h_{k+1}}$
such that for any $(\mu,v_{h_{k+1}})\in\mathbb R\times  V_{h_{k+1}}$, there holds
\begin{eqnarray*}
\left\{
\begin{array}{lll}
a(\lambda^{h_k}, u^{h_k};\widehat u^{h_{k+1}},v_{h_{k+1}})+b(u^{h_k};v_{h_{k+1}},\widehat \lambda^{h_{k+1}})
&=&\langle \cF_u'(\lambda^{h_k},u^{h_k})u^{h_k},v_{h_{k+1}}\rangle \nonumber\\
&&-\langle \cF(\lambda^{h_k}, u^{h_k}),v_{h_{k+1}}\rangle-\lambda^{h_k}(u^{h_k},v_{h_{k+1}}),\\
b(u^{h_k};\widehat u^{h_{k+1}},\mu)&=&-\mu/2-\mu(u^{h_k},u^{h_k})/2.
\end{array}
\right.
\end{eqnarray*}
\item Set  \begin{eqnarray*}
    (\lambda^{h_{k+1}},u^{h_{k+1}}) = (1-\theta_{k+1})(\lambda^{h_k},u^{h_k}) +\theta_{k+1}(\widehat\lambda^{h_{k+1}},\widehat u^{h_{k+1}}).
     \end{eqnarray*}
\item If $Resi(\lambda^{h_{k+1}},u^{h_{k+1}}) \leq Resi(\lambda^{h_{k}},u^{h_k})$, stop. Else set $\theta_{k+1}=\theta_{k+1}/2$ and goto Step 4.
\end{enumerate}
In order to simplify the notation, we use the following symbol to denote the above solving steps:
  \begin{eqnarray*}
   (\lambda^{h_{k+1}},u^{h_{k+1}})=Mixing_{-}Iteration(\lambda^{h_k}, u^{h_k},\theta_{k+1},X_{h_{k+1}}).
  \end{eqnarray*}
 \end{algorithm}

\begin{algorithm}\label{mgmix}
 Multigrid method based on mixing scheme
  \begin{enumerate}
    \item Solve the following nonlinear eigenvalue problem directly in the initial space $V_{h_1}$: Find $(\lambda^{h_1},u^{h_1})\in\mathbb R\times V_{h_1}$ such that $\|u^{h_1}\|_0=1$ and
     \begin{equation*}
      (\mathcal A\nabla u^{h_1},\nabla v_{h_1})+(Vu^{h_1},v_{h_1})+(f((u^{h_1})^2)u^{h_1},v_{h_1})=\lambda^{h_1}(u^{h_1},v_{h_1}), \quad\forall v_{h_1}\in V_{h_1}.
     \end{equation*}
    \item For $k=1,\cdots,n-1$, obtain a new approximate solution $(\lambda^{h_{k+1}},u^{h_{k+1}})\in X_{h_{k+1}}$ through
    \begin{eqnarray*}
   (\lambda^{h_{k+1}},u^{h_{k+1}})=Mixing_{-}Iteration(\lambda^{h_k}, u^{h_k},\theta_{k+1},X_{h_{k+1}}),
  \end{eqnarray*}
  End for.
  \item Finally, we obtain the approximate solution $(\lambda^{h_n},u^{h_n})\in X_{h_n}$.
  \end{enumerate}
 \end{algorithm}

\section{Numerical results}
In this section, we propose some numerical experiments to support our theoretical conclusions and demonstrate the
efficiency of the presented multigrid methods.
For the linearized boundary value problems derived by Newton iteration, we used the V-cycle multigrid method to obtain the numerical solutions.
The multigrid method includes two pre-smoothing steps and two post-smoothing steps. The adopted smoother for the pre-smoothing and post-smoothing
steps is the distributive Gauss-Seidel (DGS) iteration \cite{Saddle-book}.

\subsection{Example 1}
In the first example, we use Algorithm \ref{mgni} to solve the Gross-Pitaevskii equation: Find $(\lambda, u)$
such that
\begin{equation}\label{nonlinear_pde}
\left\{
\begin{array}{rcl}
-\triangle u+Vu +\zeta|u|^2 u&=&\lambda u, \quad {\rm in} \  \Omega,\\
u&=&0, \ \  \quad {\rm on}\  \partial\Omega,\\
\int_{\Omega}u^2d\Omega&=&1,
\end{array}
\right.
\end{equation}
where $\Omega=[0,1]^3$, $V=x_1^2+2x_2^2+4x_3^2$ and $\zeta=1$.

%The sequence of finite element spaces are constructed by linear element on a series of meshes
%produced by regular refinement with $\beta=2$ (producing $\beta^3$ congruent subelements).
%Since the exact solution is not known, an adequate accurate approximation is choosen as the exact solution
%for our numerical test.

The quadratic finite element space was adopted in this example. The sequence of meshes was produced through a one-time uniform refinement.
Thus, the refinement index $\beta$ between two consecutive meshes equals $2$.
The initial mesh is depicted in Figure \ref{mesh1}.

\begin{figure}[ht!]
  \centering
  % Requires \usepackage{graphicx}
  \includegraphics[width=8cm]{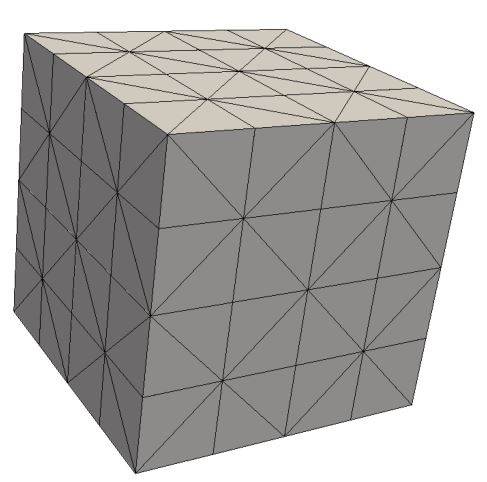}
  \caption{ The initial mesh for Example 1.}\label{mesh1}
  \label{mesh1}
\end{figure}

Because the exact solution of (\ref{nonlinear_pde}) is unknown, we select an adequately accurate approximation on a
sufficiently fine mesh as the exact solution ($\lambda=34.819449$).
The error estimates of the approximate solutions derived by Algorithm \ref{mgni} are presented in Figure \ref{ex1}.
The results show that Algorithm \ref{mgni} is able to derive the approximate solutions with the optimal error estimates.
In order to intuitively illustrate the efficiency of Algorithm \ref{mgni},
the computational time of Algorithm \ref{mgni} is presented in Table 1.
Besides, the computational time of  the direct finite element method (solve the nonlinear eigenvalue problem directly in the final finite element space) is also presented.
We can see that the linear computational complexity of Algorithm \ref{mgni} can be obtained with the refinement of mesh. In addition, Algorithm \ref{mgni} has
a great advantage over the direct finite element method.

\begin{figure}[ht!]
  \centering
  % Requires \usepackage{graphicx}
  \includegraphics[width=6.7cm,height=5.6cm]{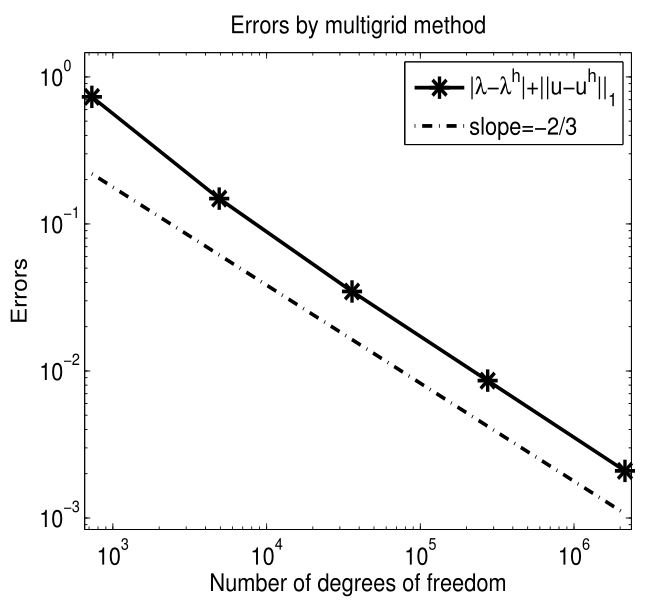}
  \caption{\small\texttt  The errors of approximate solutions derived by Algorithm \ref{mgni} for Example 1.}
           \label{ex1}
  %\caption{}%\label{time of bec1}
\end{figure}

\begin{table}[ht!]%表格浮动环境
\begin{center}
\caption{The CPU time of Algorithm \ref{mgni} and  the direct finite element method for Example 1. }
\begin{tabular}{|c|c|c|c|}\hline
Mesh level  & Number of degrees of freedom & Time of Algorithm \ref{mgni}& Time of direct FEM \\ \hline
%1 & 384           & 0.11        &   0.11       \\ \hline
1 & 729          & 0.1643    & 0.1643           \\ \hline
2 & 4913         & 0.2642    & 1.1434         \\ \hline
3 & 35937        & 1.1011    & 12.2292         \\ \hline
4 & 274625       & 8.3957    & 151.3532        \\ \hline
5 & 2146689      & 66.9435   & 3947.5318        \\ \hline
6 & 16974593     & 540.2690  & -      \\ \hline
\end{tabular}
\end{center}
\label{table1}
\end{table}

\subsection{Example 2}
 In the second example, we use Algorithm \ref{mgmix} to solve
the Gross-Pitaevskii equation (\ref{nonlinear_pde}) in $\Omega = [0,1]^3$, where $W=x_1^2+x_2^2+x_3^2+sin^2(2\pi x_1)+sin^2(2\pi x_2)+sin^2(2\pi x_3)$ and $\zeta=100$.
%The initial mesh used in this example is shown in Figure \ref{mesh2}.

In this example, due to the strong nonlinearity, Algorithm \ref{mgni} does not converge. Thus, we used Algorithm \ref{mgmix}
to solve the nonlinear eigenvalue problem and the convergent results are obtained.

The quadratic finite element space was adopted in this example. The sequence  of meshes was produced through a one-time uniform refinement.
Thus, the refinement index $\beta$ between two consecutive meshes equals $2$. The initial mesh is depicted in Figure \ref{mesh2}.

\begin{figure}[ht!]
  \centering
  % Requires \usepackage{graphicx}
  \includegraphics[width=8cm]{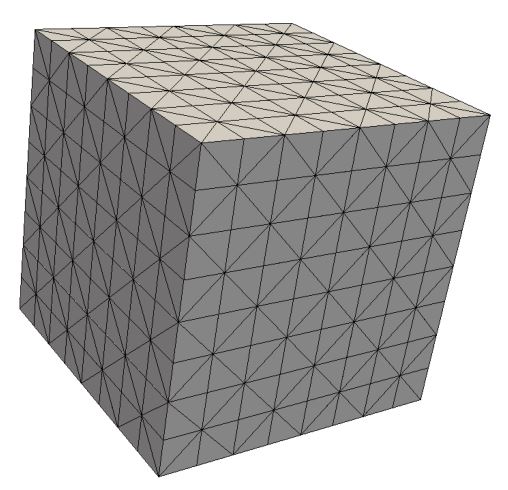}
  \caption{ The initial mesh for Example 2.}\label{mesh2}
  \label{mesh2}
\end{figure}

\begin{figure}[ht!]
  \centering
  % Requires \usepackage{graphicx}
  \includegraphics[width=6.7cm,height=5.6cm]{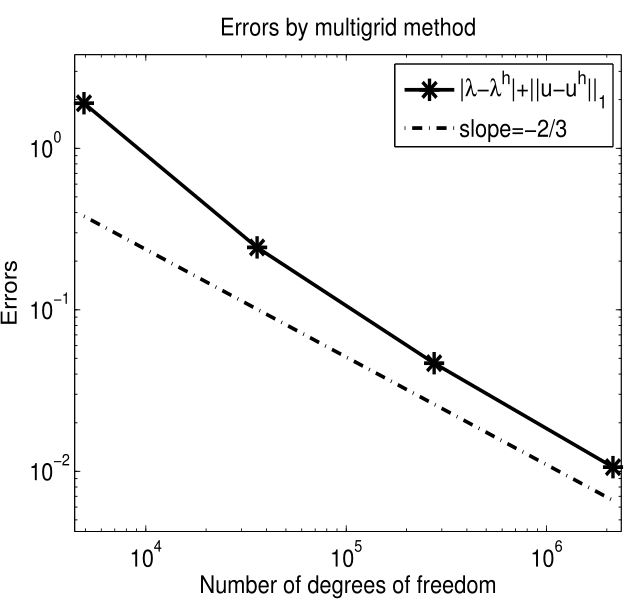}
  \caption{\small\texttt  The errors of approximate solutions derived by Algorithm \ref{mgmix} for Example 2.}
           \label{ex2}
  %\caption{}%\label{time of bec1}
\end{figure}

Because the exact solution of (\ref{nonlinear_pde}) is unknown, we select an adequately accurate approximation on a
sufficiently fine mesh as the exact solution ($\lambda=205.112532$).
The error estimates of the approximate solutions derived by Algorithm \ref{mgmix} are shown in Figure \ref{ex2}.
The results show that Algorithm \ref{mgmix} is able to derive the approximate solutions with the optimal error estimates.
In order to intuitively illustrate the efficiency of Algorithm \ref{mgmix},
the computational time of Algorithm \ref{mgmix} and the direct finite element method {\color{blue}are} presented in Table 2.
We can see that the linear computational complexity can be obtained with the refinement of mesh. In addition,
Algorithm \ref{mgmix} has a great advantage over the direct finite element method.

In Table 3, we also present the value of $\theta_k$ in each finite element space $X_{h_k}$. The residuals corresponding to different $\theta_k$
are also presented. Based on the adaptive strategy, we can see that the value of the residual monotonically decreases with the refinement of mesh, and the
results are consistent with Theorem \ref{monotonical}.

\begin{table}[ht!]%表格浮动环境
\begin{center}
\caption{The CPU time of Algorithm \ref{mgmix} and the direct finite element method for Example 2.}
\begin{tabular}{|c|c|c|c|}\hline
Mesh level  & Number of degrees of freedom & Time of Algorithm \ref{mgmix}& Time of direct FEM \\ \hline
1 & 4913         & 2.1944     & 2.1944              \\ \hline
2 & 35937        & 3.0003     & 24.5324             \\ \hline
3 & 274625       & 10.7252    & 289.6608              \\ \hline
4 & 2146689      & 80.1640    & 7560.2502             \\ \hline
5 & 16974593     & 676.3930   & -              \\ \hline
\end{tabular}
\end{center}
\label{t2}
\end{table}

\begin{table}[ht!]\label{t3}
\begin{center}
\caption{The value of $\theta_k$ in Algorithm \ref{mgmix} for Example 2. }
\begin{tabular}{|c|c|c|}\hline
Mesh level  & The value of $\theta_k$ & The value of $Resi$ \\ \hline
1         & 0.5 & 17.8814              \\ \hline
2         & 0.5 & 8.9953            \\ \hline
3       & 0.5 & 4.4768             \\ \hline
4       & 0.5 & 2.2358            \\ \hline
5      & 0.5 & 1.1355            \\ \hline
\end{tabular}
\end{center}
\end{table}
\section{Concluding remarks}
In this study, we designed a novel multigrid method to solve the nonlinear eigenvalue problem
on the basis of the Newton iteration. The novel scheme transforms the nonlinear eigenvalue problem into
a series of linearized boundary value problems defined in a {\color{blue}sequence of product finite element spaces}.
Because of avoiding solving large-scale nonlinear eigenvalue problems directly, the overall solving efficiency is significantly improved.
Besides, the optimal error estimate and linear
complexity can be derived simultaneously. In addition, an improved multigrid method coupled with the mixing scheme is also
introduced. The improved scheme can make the iteration scheme converge for more complicated models.
More importantly,  a convergence result is derived which is missing in the existing literature on the mixing scheme for nonlinear eigenvalue problems.

\section*{Declarations}
{\bf Conflict of interest}: The authors declare that they have no conflicts of interest/competing interests.\\
{\bf Code Availability}: The custom codes generated during the current study are available from the corresponding author on reasonable request.

\end{document}